\documentclass[a4paper,10pt]{amsart}

\usepackage[utf8]{inputenc}
\usepackage{amssymb}
\usepackage{amsmath}
\usepackage{amsthm}
\usepackage{color}
\usepackage{enumerate}

\newcommand{\mat}[4]{ \left ( \begin{array}{cc} #1 & #2 \\ #3 &
      #4 \end{array} \right)} 












\newcommand{\Hom}{\operatorname{Hom}}

\newcommand{\End}{\operatorname{End}}
\DeclareMathOperator{\Par}{Par}
\newcommand{\Aut}{\operatorname{Aut}}

\newcommand{\Ker}{\operatorname{Ker}}

\newcommand{\Img}{\operatorname{Im}}
\newcommand{\Coker}{\operatorname{Coker}}




 


\newcommand{\Modr}[1]{\mathrm{Mod}\textrm{-}{#1}}






\DeclareMathOperator{\Z}{Z}


\theoremstyle{plain}
\newtheorem{theorem}{Theorem}[section]
\newtheorem{lemma}[theorem]{Lemma}
\newtheorem{proposition}[theorem]{Proposition}
\newtheorem{corollary}[theorem]{Corollary}

\newtheorem{definition}[theorem]{Definition}

\newtheorem{remark}[theorem]{Remark}
\newtheorem{remarks}[theorem]{Remarks}

\newtheorem{example}[theorem]{Example}
\newtheorem{examples}[theorem]{Examples}

\usepackage{tikz-cd}

\title{ENDOMORPHISM RINGS VIA MINIMAL MORPHISMS}
 \subjclass[2010]{16W20; 16D90; 16D50}
\keywords{Endomorphism ring; Ziegler partial morphism; approximations; automorphism-invariant}
\author[M. Cort\'es]{Manuel Cort\'es-Izurdiaga}
\address{Departamento de Matem\'aticas, Universidad de Almer\'{\i}, Almer\'{\i}a, 04120, Spain}
\email{mizurdia@ual.es}\thanks{The first author is partially supported by the Spanish Government under
   grants MTM2016-77445-P and MTM2017-86987-P which include FEDER funds of the EU and grant UAL18-FQM-B008A-E(UAL/CECEU/FEDER)}
\author[P. A. Guil]{Pedro A. Guil Asensio}
\address{Departamento de Mathematicas, Universidad de Murcia, Murcia, 30100, Spain}
\email{paguil@um.es}\thanks{The
   second author is partially supported by the Spanish Government under
   grant MTM2016-77445-P which includes FEDER funds of the EU, and by Fundaci\'on S\'eneca of
   Murcia under grant
  19880/GERM/15}
\author[D. Kesk\.{i}n]{D. Kesk\.{i}n T\"ut\"unc\"u}
\address{Department of Mathematics, Hacettepe University, Ankara, 06800, Turkey}
\email{keskin@hacettepe.edu.tr}
\author{Ashish K. Srivastava}
\address{Department of Mathematics and Statistics, Saint Louis University, St.
Louis, MO-63103, USA} \email{ashish.srivastava@slu.edu}\thanks{The fourth author is partially supported by a grant from Simons
   Foundation (grant number 426367).}

\begin{document}

\begin{abstract}
We prove that if $u:K \rightarrow M$ is a left minimal extension, then there exists an isomorphism between two subrings, $\End_R^M(K)$ and $\End_R^K(M)$ of $\End_R(K)$ and $\End_R(M)$ respectively, modulo their Jacobson radicals. This isomorphism is used to deduce properties of the endomorphism ring of $K$ from those of the endomorphism ring of $M$ in certain situations such us when $K$ is invariant under endomorphisms of $M,$ or when $K$ is invariant under automorphisms of $M$.
\end{abstract}

\maketitle

\section*{Introduction}

\noindent Let $R$ be a not necessarily commutative ring with unit element and $M$ be a right $R$-module. Suppose that we want to study the endomorphism ring of $M$. One way to do this is to consider an approximation of $M$ by a class of modules $\mathcal X$, from which we know the structure of the endomorphism ring of its objects, and try to deduce properties of $\End_R(M)$ from those of the endomorphism ring of its $\mathcal X$-approximation.

This approach has been fruitfully used to study the endomorphism ring of some classical classes of module. For instance, it is well known that the endomorphism ring of an injective module is von Neumann regular and right self injective modulo the Jacobson radical, and idempotents lift modulo the Jacobson radical. Extending this structural result, Faith and Utumi proved in \cite{FaithUtumi} that the endomorphism ring of a quasi-injective module enjoys these properties as well, using the fact that quasi injective modules are, precisely, those which are invariant under the action of endomorphisms of their injective envelopes \cite{JohnsonWong}. 

Later on, Guil Asensio and Srivastava proved in \cite{GuilSrivastava} that if $M$ is invariant under automorphisms of its injective envelope, then its endomorphism ring is a von Neumann regular ring modulo its Jacobson radical $J$, idempotents lift modulo $J$, and $J$ consists of those endomorphism of $M$ which have essential kernel. This is another example in which properties of the endomorphism ring of the injective envelope of $M$ are transferred to the endomorphism ring of $M$.

This last result has been extended in \cite{GuilKeskinSrivastava} to approximations with respect to a class of modules $\mathcal X$ in the following way: if $M$ has an $\mathcal X$-envelope $M \rightarrow X$ such that $M$ is invariant under automorphisms of $X$, and $\End_R(X)$ is von Neumann regular modulo the Jacobson radical $J$, and idempotents lift modulo $J$, then the endomorphism ring of $M$ shares these properties.

The common situation in all these results is that there exists a morphism $u:M \rightarrow X$, which is minimal in some sense and such that $M$ is invariant under a set of endomorphisms of $M$, and it is possible to deduce the properties of the endomorphism ring of $M$ from those of the endomorphism ring of $X$. 

In this paper we study the relationship between the endomorphisms ring of two modules $K$ and $M$ when there exists a left minimal monomorphism $u:K \rightarrow M$ (see Definition \ref{d:LeftMinimal} for the notion of left minimal morphism). We prove, in Theorem \ref{t:IsomorphismEndomorphismRing}, that there exists an isomorphism between two subrings, $\End_R^M(K)$ and $\End_R^K(M)$ of $\End_R(K)$ and $\End_R(M)$ respectively (see Definition \ref{d:Subrings}), modulo their Jacobson radical. These subrings coincide with the full endomorphisms rings when $u$ is injective with respect to $K$ and $K$ is invariant under endomorphisms of $M$ and, consequently, in this case we obtain an isomorphism between $\End_R(K)$ and $\End_R(M)$ modulo their Jacobson radicals. This result explains the structure of the endomorphism ring of a quasi-injective module obtained by Faith and Utumi or, more generally, of the endomorphism ring of a module which is invariant under endomorphisms of its $\mathcal X$-envelope for some class of modules $\mathcal X$ (see Corollary \ref{c:FinalCorollary}).

When the module $K$ is invariant under automorphisms of $M$ in the left minimal morphism $u:K \rightarrow M$, then there does not exist, in general, an isomorphism between the endomorphisms ring of $K$ and $M$ modulo their Jacobson radical. However, the second part of our main Theorem \ref{t:IsomorphismEndomorphismRing}, which states that the aforementioned subring $\End_R^M(K)$ of $\End_R(K)$ modulo the Jacobson radical is isomorphic to a subring of $\End_R(M)$ modulo the Jacobson radical, allows us to transfer, in Theorem \ref{t:IsomorphismEndomorphismRingAutoInvariant}, properties from $\End_R(M)$ to $\End_R^M(K)$. As above, if the monomorphism $u$ is $K$-injective, then $\End_R^M(K)$ is the whole endomorphism ring of $K$, and, again, we can obtain the structure of $\End_R(K)$ from the structure of $\End_R(M)$.

There are other remarkable results in this paper. We prove, in Corollary \ref{c:EndomorphismIdeals} that, if $R$ is commutative, and $I$ is a cyclic maximal ideal which is not a direct summand, then $\End_R(I)$ and $R$ are isomorphic modulo the Jacobson radical.

In addition, we see in Proposition \ref{p:ZieglerSmallExtensions} that Ziegler small extensions are left minimal (see Definition \ref{d:ZieglerSmallExtension} for the notion of Ziegler small extensions). Since, as it was proved by Ziegler, there are many Ziegler small pure-injective extensions which are not pure-injective envelopes (see Example \ref{e:ZieglerSmallExtension}), this gives new situations in which our results can be applied.

Throughout this paper, $R$ is an associative ring with unit element. Module means right $R$-module and we denote by $\Modr R$ the category of all such modules. The Jacobson radical of $R$ is denoted by $J(R)$, and the group of units by $U(R)$. If $M$ is a module, we denote by $\End_R(M)$ its endomorphism ring and by $\Aut_R(M)$ the group  consisting of all automorphisms of $M$, that is, $\Aut_R(M) = U(\End_R(M))$.

\section{Subrings of endomorphisms rings associated to a minimal inclusion}

\noindent In this section we study the relationship between the endomorphism rings modulo the Jacobson radical of $K$ and $M$ for any left minimal inclusion $K \hookrightarrow M$. Let us begin recalling the definition of left minimal morphisms of modules \cite[p. 8]{AuslanderReitenSmalo}:

\begin{definition}\label{d:LeftMinimal}
We say that a morphism $u:M \rightarrow N$ is left minimal if any endomorphism $g:N \rightarrow N$ such that $gu=u$, is an isomorphism. If $u$ is monic, we call it a left minimal extension.
\end{definition}

\begin{examples}\label{e:ExamplesMinimalMorphisms}
\begin{enumerate}
\item Let $\mathcal X$ be a class of all modules. Recall that an $\mathcal X$-preenvelope of a module $M$ is a morphism $u:M \rightarrow C$ with $C \in \mathcal X$, such that $\Hom_R(u,C')$ is an epimorphism of abelian groups for each $C' \in \mathcal X$. An $\mathcal X$-envelope is an $\mathcal X$-preenvelope which is a left minimal morphism. The existence of $\mathcal X$-envelopes is known for many classes of modules. Examples are the injective, cotorsion and pure-injective modules.

\item Let $\mathcal I$ be an ideal in the category $\Modr R$, that is, a sub-bifunctor of the bifunctor $\Hom$. An $\mathcal I$-preenvelope of a module $M$ (see \cite{FuGuilHerzogTorrecillas}) is a morphism $u:M \rightarrow I$ belonging to $\mathcal I$, such that for each morphism $j:M \rightarrow J$ belonging to $\mathcal I$, there exists $k:I \rightarrow J$ such that $ku=j$. A monic $\mathcal I$-preenvelope is precisely an $\mathcal I$-envelope which is a left minimal morphism.

\item Recall that a submodule $K$ of a module $M$ is essential if $L \cap K \neq 0$ for each non-zero submodule $L$ of $M$. A monomorphism $u:K \rightarrow M$ is called an essential extension if $\Img u$ is an essential submodule in $M$. 

Any essential extension $u:K \rightarrow M$ with $M$ quasi-injective is left minimal. In order to prove this, take $f:M \rightarrow M$ with $fu=u$. Then $f$ is monic, since $\Ker f \cap u(K) = 0$, as $u$ is monic. Using that $M$ is quasi-injective, we get that $f$ is a split monomorphism, which implies that $\Img f$ is a direct summand of $M$ containing $u(K)$. But, as $u(K)$ is essential, $\Img f$ is the whole $M$ and $f$ is an isomorphism.

The assumption of $M$ being quasi-injective is fundamental. In order to see this, suppose that the singular right ideal $\Z(R)$ of $R$ is non-zero and not equal to $J(R)$ and let $x \in \Z(R)$ not belonging to $J(R)$. Then there exists $t \in R$ such that $1-tx$ is not a unit. Then the right ideal $I=\{r \in R \mid xr=0\}$ is essential in $R$. Moreover, the inclusion $u:I \rightarrow R$ is not left minimal, as the morphism $f:R \rightarrow R$ given by $g(z) = (1-tx)z$ satisfies that $gu=u$ and it is not an isomorphism, as $1-tx$ is not a unit.

\item As an extension of the preceding example consider the following setup: let $\mathcal F$ be an additive exact structure in $\Modr R$, that is, a class of short exact sequences in $\Modr R$ that defines an exact structure (see \cite{Buhler}), and such that it is closed under direct sums; for instance, the class of all pure-exact sequences in $\Modr R$. An $\mathcal F$-injective hull of a module $M$ is an inflation $u:M \rightarrow F$, that is, the short exact sequence $0 \rightarrow U \rightarrow F \rightarrow \Coker u \rightarrow 0$ belongs to $\mathcal F$ such that $F$ is $\mathcal F$-injective in the sense that $F$ is injective with respect to all short exact sequences belonging to $\mathcal F$ and $u$ is $\mathcal F$-essential in the sense that for any other morphism $v:F \rightarrow G$ such that $vu$ is an $\mathcal F$-inflation, $u$ is an $\mathcal F$-inflation as well. In \cite[Theorem 3.10]{CortesGuilBerkeAshish} it is proved that an $\mathcal F$-injective hull is a left minimal morphism. For instance, the pure-injective envelope of a module is left minimal.
\end{enumerate}
\end{examples}

\noindent In the preceding example we saw that an essential extension might not be left minimal. However, we have:

\begin{proposition}\label{p:FiniteLengthCokernel}
Let $u:K \rightarrow M$ be a monomorphism. Then:
\begin{enumerate}
\item If $u$ is essential and $\Coker u$ has finite length, then $u$ is left minimal.

\item If $\Coker u= \oplus_{i \in I}S_i$ for a family $\{S_i\mid i \in I\}$ of simple modules satisfying the following two conditions;
\begin{enumerate}
\item $S_i \ncong S_j$ if $i \neq j$, and 

\item $S_i$ is not isomorphic to a direct summand of $M$ for any $i\in I$.
\end{enumerate}
\noindent then $u$ is left minimal.
\end{enumerate}

\end{proposition}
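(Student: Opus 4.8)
The plan is to handle both parts by a common strategy: given $g:M\to M$ with $gu=u$, show that $g$ is both monic and epic. Monicity will come from the essentiality of $u$ in part (1) and will require a separate argument in part (2); epicness will come from controlling $\Coker u$.

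For part (1), first I would observe that $g$ is monic: since $gu=u$ and $u$ is monic, $\Ker g\cap \Img u=0$, so $\Ker g=0$ because $\Img u$ is essential. Thus $g$ is an injective endomorphism of $M$. Next I would pass to cokernels: $g$ induces a map $\bar g\colon\Coker u\to\Coker u$ fitting in a commutative diagram with the two copies of the defining short exact sequence $0\to K\xrightarrow{u}M\to\Coker u\to 0$; since $gu=u$, the left square commutes with the identity on $K$, so $\bar g$ is well defined. The snake lemma then gives an exact sequence relating $\Ker g=0$, $\Ker\bar g$, $\Coker g$, and $\Coker\bar g=0$ (the latter because $g$ is monic between the middle terms — more precisely one gets $\Coker g\cong\Ker\bar g$ possibly up to the relevant connecting maps). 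Because $\Coker u$ has finite length and $\bar g$ is a surjective-or-injective endofunction issue, the key point is that an endomorphism of a finite length module is injective if and only if it is surjective if and only if it is an automorphism; combined with $\Img u$ essential (hence $g$ monic) this forces $\bar g$ to be an isomorphism, whence $\Coker g=0$ and $g$ is an isomorphism. I would need to be careful about the exact form of the snake-lemma sequence, but the arithmetic of lengths makes the conclusion unambiguous.

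For part (2), the cokernel is a semisimple module $\bigoplus_{i\in I}S_i$ with pairwise non-isomorphic simple summands, so its endomorphism ring is the product $\prod_i\End_R(S_i)$ and every endomorphism preserves each homogeneous (here, each individual) component $S_i$. Given $g:M\to M$ with $gu=u$, I again form $\bar g:\Coker u\to\Coker u$. The first task is to show $g$ is monic. Here the new input is condition (b): if $\Ker g\neq 0$, then since $g u = u$ is monic we have $\Ker g\cap\Img u=0$, so $\Ker g$ embeds into $\Coker u$, hence contains a copy of some $S_i$; but then $\Ker g$, being a submodule of $M$ with $\Ker g\cap\Img u=0$... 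I would instead argue that $M=\Img u\oplus(\text{something})$ is not available, so the cleaner route is: $g$ restricted to any simple submodule of $\Ker g$ is zero, that simple is isomorphic to some $S_i$, and I must derive a contradiction with (b). The mechanism is that $\Ker g$, having trivial intersection with the essential-up-to-finite-colength $\Img u$... — the honest statement is that $\Img u + \Ker g$ is a submodule on which $g$ acts, its image under the projection $M\to\Coker u$ contains the image of $\Ker g$, and a diagram chase shows $\Ker g$ maps isomorphically onto a direct summand of $\Coker u$; pulling this back and using that $g$ is the identity on $\Img u$ produces a submodule of $M$ isomorphic to $S_i$ that splits off, contradicting (b). Once $g$ is monic, $\bar g$ is monic as well; and a monic endomorphism of the semisimple module $\bigoplus_i S_i$ that respects the (multiplicity-free) isotypic decomposition is automatically an isomorphism on each $S_i$ (a nonzero endomorphism of a simple module is an iso, and monic rules out the zero map), hence $\bar g$ is an isomorphism; then $\Coker g=0$ by the snake lemma and $g$ is an isomorphism.

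The main obstacle, in both parts, is the monicity-versus-epicness bookkeeping through the snake lemma — specifically, in part (2), arguing rigorously that $\Ker g$ would give a simple direct summand of $M$ isomorphic to some $S_i$. I expect the cleanest formulation is: let $0\neq S\subseteq\Ker g$ be simple; then $u(K)\cap S=0$, so $S$ embeds into $\Coker u=\bigoplus_i S_i$ via $M\twoheadrightarrow\Coker u$, forcing $S\cong S_{i_0}$ for a unique $i_0$; writing $\pi:\Coker u\to S_{i_0}$ for the projection, the composite $S\hookrightarrow M\twoheadrightarrow\Coker u\xrightarrow{\pi}S_{i_0}$ is an isomorphism, so $M=S\oplus\ker(\pi\circ q\restriction)$ where $q:M\to\Coker u$, exhibiting $S_{i_0}$ as a direct summand of $M$ and contradicting (b). I would use condition (a) only to guarantee $\End_R(\Coker u)=\prod_i\End_R(S_i)$ so that $\bar g$ decomposes componentwise; without (a) a monic $\bar g$ could permute isomorphic summands without being surjective on the whole, though in fact even then it would still be an automorphism of a finite... — but since $I$ may be infinite, (a) is genuinely needed to force each component map to be onto.
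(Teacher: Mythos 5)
Your part~(1) is essentially the paper's argument: both reduce to the induced map $\bar g$ on $\Coker u$, conclude $\bar g$ is monic (the paper checks $f^{-1}(u(K))=u(K)$ directly rather than invoking the snake lemma, but this is cosmetic), apply Fitting's lemma to get $\bar g$ iso, and lift back to $g$. One caution: the snake lemma for the diagram with $1_K$ on the left gives $\Ker g\cong\Ker\bar g$ and $\Coker g\cong\Coker\bar g$; your parenthetical ``$\Coker\bar g=0$ ... more precisely $\Coker g\cong\Ker\bar g$'' misstates both isomorphisms, though as you note the finite-length bookkeeping forces the right conclusion anyway.

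Your part~(2) is correct but follows a genuinely different route. The paper never establishes that $g$ is monic; instead it partitions $I$ into $I_1=\{i:\bar g(S_i)\neq0\}$ and $I_2=\{i:\bar g(S_i)=0\}$, lifts $\bigoplus_{i\in I_1}S_i$ to a submodule $L$ with $K\leq L\leq M$, observes $g(M)\leq L$, shows $g|_L$ is an isomorphism via the Five Lemma, deduces that $L$ is a direct summand of $M$ (using $(g|_L)^{-1}\circ g$ as a retraction), and then sees $M/L\cong\bigoplus_{i\in I_2}S_i$ as a direct summand of $M$, contradicting (b) unless $I_2=\emptyset$. You instead attack $\Ker g$ directly: since $\Ker g\cap u(K)=0$, $\Ker g$ embeds in the semisimple module $\Coker u$, hence is semisimple (you should state this, since it's what guarantees a nonzero $\Ker g$ contains a simple $S$); such an $S$ maps isomorphically onto some $S_{i_0}$ under the composite $M\twoheadrightarrow\Coker u\to S_{i_0}$, splitting $S\cong S_{i_0}$ off $M$ and contradicting (b). Then monicity of $g$ gives monicity of $\bar g$, and (a) forces $\bar g(S_i)\leq S_i$, hence $\bar g$ is componentwise a nonzero endomorphism of a simple and so an isomorphism, whence $g$ is onto. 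Your approach buys a cleaner split into monic-then-epic with less machinery (no complementation of direct summands, no auxiliary $L$, only one application of the simple-splits-off lemma); the paper's approach has the advantage of not needing to know a priori that $\Ker g$ is semisimple, since it contradicts (b) through $M/L$ rather than through $\Ker g$. Your closing remark on why (a) cannot be dropped when $I$ is infinite is a correct and worthwhile observation.
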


\begin{proof}
(1) Let $f:M \rightarrow M$ be a morphism satisfying $fu=u$. Since $u$ is an essential monomorphism, $f$ is monic. Notice that $\Ker f \cap u(K)=0$, as $u$ is monic. Now, we can construct a commutative diagram with exact rows
\begin{displaymath}
\begin{tikzcd}
0 \arrow{r} & K \arrow{r}{u} \arrow[equal]{d} & M \arrow{r}{p} \arrow{d}{f} & \Coker u \arrow{r} \arrow{d}{\overline f} & 0\\
0 \arrow{r} & K \arrow{r}{u} & M \arrow{r}{p} & \Coker u \arrow{r} & 0
\end{tikzcd}
\end{displaymath}
Then $\overline{f}$ is monic, since $\Ker pf = f^{-1}(u(K)) = u(K) = \Ker p$. Using that $fu=u$ we get that $u(K) \leq f^{-1}(u(K))$; the other inclusion follows from the fact that if $x \in f^{-1}(u(K))$, then $f(x) = u(k)$ for some $k \in K$ and $f(x-u(k)) = 0$, which means that $x=u(k) \in u(K)$. 

Now, since $\Coker u$ has finite length, ``Fitting's Lemma" implies that $\overline f$ actually is an isomorphism. Then the ``Five Lemma" gives that $f$ is an isomorphism as well.

(2) Suppose that $u$ is the inclusion. Let $f:M \rightarrow M$ be a morphism such that $fu=u$. We can induce a commutative diagram as above
\begin{displaymath}
\begin{tikzcd}
0 \arrow{r} & K \arrow{r}{u} \arrow[equal]{d} & M \arrow{r}{p} \arrow{d}{f} & M/K \arrow{r} \arrow{d}{\overline f} & 0\\
0 \arrow{r} & K \arrow{r}{u} & M \arrow{r}{p} & M/K \arrow{r} & 0
\end{tikzcd}
\end{displaymath}
Now, consider the following subsets of $I$: $I_1=\{i \in I\mid \overline f(S_i)\neq 0\}$ and $I_2=\{i \in I \mid \overline f(S_i) = 0\}$. Then the family $\{\overline f(S_i)\mid i \in I_1\}$ is independent, since otherwise, some $S_i$ would be isomorphic to some distinct $S_j$, which contradicts the hypothesis. Using that the decomposition $\{S_i\mid i \in I\}$ complements direct summands, there exists $J \leq I$ such that $M/K=\left(\bigoplus_{i \in I_1}\overline{f}(S_i)\right) \bigoplus  \left(\bigoplus_{j \in J}S_j\right)$.  But, since $\bigoplus_{i \in I_1}\overline f(S_i) \cong \bigoplus_{i \in I_1}S_i$,  $\bigoplus_{j \in J}S_j$ is isomorphic to $\bigoplus_{i \in I_2}S_i$ which implies that $J = I_2$. The conclusion is that, if $T = \bigoplus_{i \in I_1} \overline f(S_i)$, then $M/K=T \bigoplus \left(\bigoplus_{i \in I_2}S_i\right)$.

Now take a submodule $L \leq M$ with $K \leq L$ and $L/K = \bigoplus_{i \in I_1}S_i$. Since $f$ induces the zero morphism in $M/L$, $f(M) \leq L$. Then, looking at the commutative diagram
\begin{displaymath}
\begin{tikzcd}
0 \arrow{r} & K \arrow{r} \arrow[equal]{d} & L \arrow{r}{p} \arrow{d}{f|_L} & L/K \arrow{r} \arrow{d}{\overline{f}|_{L/K}} & 0\\
0 \arrow{r} & K \arrow{r} & L \arrow{r}{p} & T \arrow{r} & 0
\end{tikzcd}
\end{displaymath}
we deduce that $f|_L$ is an isomorphism as a consequence of the ``Five Lemma". Note here that $f$ induces an isomorphism from $\oplus_{i \in I_1}S_i$ to $T$. This yields that $L$ is a direct summand of $M$, which implies that $\bigoplus_{i \in I_2}S_i$  is isomorphic to a direct summand of $M$. Since this is not the case, by hypothesis, we conclude that $I_2=0$. Consequently, $I_1 = I$, $\overline f$ is an isomorphism and, by the ``Five Lemma", $f$ is an isomorphism as well.
\end{proof}

The hypothesis of $u$ being essential in (1) cannot be removed, since the inclusion $u:K \rightarrow M$ could be, for instance, a splitting monomorphism which is not an isomorphism and splitting monomorphisms are not left minimal because, if $u:K \rightarrow M$ is such a monomorphism, taking $v:M \rightarrow K$ with $vu=1_K$, we have that $uvu=u$ but $uv:M \rightarrow M$ is not an isomorphism. Moreover, there exist non-splitting and non-essential monomorphisms with cokernel having finite length, which are not left minimal, as the following example shows.

\begin{example}
Let $M_1$ be an indecomposable module of length $2$ and $M_2$ a simple module. Let $M = M_1 \oplus M_2$. Then the inclusion $u:\textrm{Soc}(M_1) \rightarrow M$ is not essential and its cokernel has finite length. Moreover, $u$ is not left minimal, since if $e \in \End_R(M)$ is an idempotent endomorphism satisfying $e(M)=M_1$, then $eu=u$, but $e$ is not an isomorphism.
\end{example}

Other example of left minimal morphisms are given by the Ziegler small extensions introduced in \cite{Ziegler}. These small extensions are based in the notion of partial morphism introduced in \cite{Ziegler} in model theoretical language, studied in \cite{Monari} with algebraic methods and developed in \cite{CortesGuilBerkeAshish} in exact categories. In this paper we only consider partial morphism and Ziegler small extensions relative to the pure-exact structure in the module category.

\begin{definition}\label{d:ZieglerSmallExtension}
Let $u:K \rightarrow M$ be a monomorphism.
\begin{enumerate}
\item Let $f:K \rightarrow N$ be a morphism and consider the pushout of $f$ and the inclusion $u:K \rightarrow M$:
\begin{displaymath}
\begin{tikzcd}
K \arrow{r}{u} \arrow{d}{f} & M \arrow{d}{\overline f}\\
N \arrow{r}{\overline u} & P
\end{tikzcd},
\end{displaymath}
Then:
\begin{enumerate}
\item $f$ is called a \textit{partial morphism} from $M$ to $N$ with domain $K$ if $\overline u$ is a pure monomorphism. We shall denote by $\Par_R^K(M,N)$, the set of all partial morphisms from $M$ to $N$ with domain $K$.

\item $f$ is called a \textit{partial isomorphism} from $M$ to $N$ with domain $K$ if both $\overline u$ and $\overline f$ are pure monomorphisms.
\end{enumerate}
\item $u$ is a \textit{Ziegler small extension} if for any morphism $g:M \rightarrow N$ such that $gu$ is a partial isomorphism, $g$ is a pure monomorphism.
\end{enumerate}
\end{definition}

\noindent Let $u:M \rightarrow E$ be a monomorphism with $E$ pure-injective. Notice that if $u$ is a pure-injective envelope of $M$, then $u$ is a Ziegler small extension by \cite[Theorem 3.10]{CortesGuilBerkeAshish}. However, $u$ can be a Ziegler small extension without being a pure-injective envelope, that is, $u$ might not be pure. However, $u$ is always left minimal:

\begin{proposition}\label{p:ZieglerSmallExtensions}
Every Ziegler small extension $u:M \rightarrow E$ with $E$ pure-injective is a left minimal monomorphism.
\end{proposition}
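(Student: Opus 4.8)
The plan is to take an arbitrary $f\colon E\to E$ with $fu=u$ and prove it is an isomorphism, combining the defining property of Ziegler small extensions with the pure-injectivity of $E$.

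The key preliminary observation is that $u$, viewed as a morphism from $E$ to $E$ with domain $M$, is itself a partial isomorphism, regardless of whether $u$ is a pure monomorphism. To see this, form the pushout $P$ of $u\colon M\to E$ along $u\colon M\to E$. The codiagonal map $E\oplus E\to E$, $(x,y)\mapsto x+y$, kills the image of $M$ under $m\mapsto(u(m),-u(m))$, hence factors through $P$ and yields a common retraction of the two structural maps $E\to P$. Thus both legs of the pushout are split, in particular pure, monomorphisms, so $u$ is a partial isomorphism from $E$ to $E$ with domain $M$.

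The rest is then formal. Since $fu=u$ is a partial isomorphism and $u$ is a Ziegler small extension, the definition forces $f$ to be a pure monomorphism. Because $E$ is pure-injective, the pure-exact sequence $0\to E\xrightarrow{f}E\to\Coker f\to 0$ splits, so there is $h\colon E\to E$ with $hf=1_E$. Then $hu=h(fu)=(hf)u=u$, so $h$ satisfies the same hypothesis as $f$; applying the previous paragraph with $h$ in place of $f$ shows $h$ is a pure monomorphism, hence injective. As $h$ is also a split epimorphism it is an isomorphism, and therefore $f=h^{-1}$ is an isomorphism as well, which is precisely left minimality of $u$.

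I expect the only genuinely non-routine point to be the first one: recognizing that the pushout of $u$ along itself always has split legs, so that $u$ itself qualifies as a partial isomorphism even when the extension fails to be pure. After that, the proof is just the interaction of the Ziegler small extension property with the lifting property defining pure-injectivity; the identity $hu=u$ and the fact that a monomorphism which is also a split epimorphism is an isomorphism are immediate.
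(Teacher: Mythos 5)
Your proof is correct and takes essentially the same route as the paper's. The paper also forms the pushout of $u$ with $fu=u$, obtains the common retraction of the two legs via the universal property applied to the cone $(1_E,1_E)$ (which is exactly your codiagonal argument), concludes $fu$ is a partial isomorphism so $f$ is a pure monomorphism, splits it by pure-injectivity to get $h$ with $hf=1_E$, reapplies the argument to $h$ (since $hu=u$), and finishes by observing $f$ must then be surjective.
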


\begin{proof}
Let $f:E \rightarrow E$ be such that $fu=u$. If we consider the pushout of $fu$ and $u$ we get a commutative diagram
\begin{displaymath}
\begin{tikzcd}
K \arrow{r}{u} \arrow{d}{fu} & E \arrow{d}{h_2}\\
E \arrow{r}{h_1} & P
\end{tikzcd}.
\end{displaymath}
Since $fu=u$, the identity $1_E$ satisfies that $1_Eu = 1_Efu$, so that, by the universal property of the pushout, there exists $g:P \rightarrow E$ satisfying $gh_1=1_E$ and $gh_2 = 1_E$. In particular, both $h_1$ and $h_2$ are splitting monomorphisms and, consequently, $fu$ defines a partial isomorphism from $E$ to $E$ with domain $K$. Since $u$ is a Ziegler small extension, $f$ is a pure-monomorphism.

Now, using that $E$ is pure-injective we get that $f$ is a split monomorphism and there exists $h:E \rightarrow E$ such that $hf=1_E$. Then $u=hfu=hu$ so that, using the previous argument, we conclude that $h$ is a monomorphism. Then, as $E = \Img f \oplus \Ker h$, we get that $f$ is an epimorphism.
\end{proof}

Now we define some rings of morphisms related with a monomorphism $u:K \rightarrow M$.

\begin{definition}\label{d:Subrings}
Let $u:K \rightarrow M$ be a monomorphism. We define:
\begin{itemize}
\item $\End_R^M(K) = \{f \in \End_R(K) \mid \exists g \in \End_R(M) \textrm{ with } uf=gu\}$.

\item $\End_R^K(M) = \{f \in \End_R(M) \mid \exists g \in \End_R(K) \textrm{ with } fu=ug\}$.

\item $\overline \End_R^K(M) = \{f \in \End_R(M) \mid fu=0\}$.
\end{itemize}
\end{definition}

The first of these subrings is related with partial morphisms defined previously.

\begin{proposition}\label{p:PartialMorphisms}
Let $u:K \rightarrow M$ be a monomorphism. Then:
\begin{enumerate}
\item $\End_R^K(M) \subseteq \Par_R^K(M,K)$.

\item If $M$ is pure-injective, then $\End_R^K(M)=\Par_R^K(M,K)$.
\end{enumerate}
\end{proposition}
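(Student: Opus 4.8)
The plan is to read part (1) off the universal property of the pushout appearing in the definition of partial morphism, and part (2) off the defining extension property of pure-injective modules.

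For (1): let $f\in\End_R^K(M)$ and let $g\in\End_R(K)$ be the unique morphism with $fu=ug$ (unique because $u$ is monic); this $g$ is the element of $\End_R(K)$ represented by $f$, and what must be shown is that $g\in\Par_R^K(M,K)$. I would form the pushout of $g$ against $u$, with induced maps $\overline u\colon K\to P$ and $\overline g\colon M\to P$ (so $\overline g u=\overline u g$). The identity $fu=ug$ says precisely that $(f\colon M\to M,\ u\colon K\to M)$ is a cocone under the span $M\xleftarrow{u}K\xrightarrow{g}K$, so the universal property produces $c\colon P\to M$ with $c\overline g=f$ and $c\overline u=u$. Since $c\overline u=u$ is monic, $\overline u$ is monic; the point is that $\overline u$ is moreover \emph{pure}, which I would deduce from the factorisation $u=c\circ\overline u$ together with the general fact that $\overline u$ is a pure monomorphism whenever $c\circ\overline u$ is (for any left $R$-module $X$, injectivity of $X\otimes(c\circ\overline u)$ forces injectivity of $X\otimes\overline u$) — the input here being that $u$ itself is a pure monomorphism.

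For (2): by (1) it remains only to prove $\Par_R^K(M,K)\subseteq\End_R^K(M)$. Given $h\in\Par_R^K(M,K)$, form the pushout of $h$ against $u$, with maps $\overline u\colon K\to P$ and $\overline h\colon M\to P$; by definition of partial morphism, $\overline u$ is a pure monomorphism. Since $M$ is pure-injective, the morphism $u\colon K\to M$ extends along $\overline u$, i.e.\ there is $r\colon P\to M$ with $r\overline u=u$. Put $f:=r\overline h\in\End_R(M)$; then $fu=r\overline h u=r\overline u h=uh$, so $f\in\End_R^K(M)$ and it represents exactly $h$. Hence $h\in\End_R^K(M)$ and the two sets coincide.

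The step I expect to be the main obstacle is establishing that $\overline u$ in (1) is a \emph{pure} monomorphism rather than merely a monomorphism: monicity is immediate from $c\overline u=u$, but the purity is where the content sits, and it is precisely the place where one leans on $u$ being a pure embedding. Everything else — producing $c$ from the cocone $fu=ug$, reading off monicity, and in (2) extending $u$ along the pure monomorphism $\overline u$ by pure-injectivity of $M$ — is routine bookkeeping with the universal property.
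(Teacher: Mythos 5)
The paper's own ``proof'' of this proposition is a one-line citation to an external result of Cort\'es-Izurdiaga, Guil Asensio, Kalebo\v{g}az and Srivastava, so there is no internal argument to compare with; you have supplied one from scratch. Your reading of the statement --- that $\End_R^K(M)$ should be regarded as a subset of $\End_R(K)$ via the restriction $f\mapsto g$ determined by $fu=ug$, so that the claimed inclusion is really $\End_R^M(K)\subseteq\Par_R^K(M,K)$ --- is the natural one and is consistent with the way the proposition is later invoked in the paper, namely as the equality $\End_R^M(K)=\Par_R^K(M,K)$. Your argument for the inclusion $\Par_R^K(M,K)\subseteq\End_R^M(K)$ in part (2), extending $u$ along the pure monomorphism $\overline u$ using pure-injectivity of $M$ and setting $f:=r\overline h$, is correct and is exactly the right idea.

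The genuine gap is in part (1), and you flagged it yourself: the purity of $\overline u$ is deduced from the purity of $u=c\overline u$, but the proposition only assumes that $u$ is a monomorphism, not a pure one. This is not a harmless omission. The partial-morphism machinery in the paper (Definition~\ref{d:ZieglerSmallExtension}, Example~\ref{e:ZieglerSmallExtension}) is designed precisely for $u$ that is \emph{not} pure; if $u$ were pure, then, since pure monomorphisms are stable under pushout, $\overline u$ would automatically be pure for \emph{every} $g\colon K\to N$, so $\Par_R^K(M,N)=\Hom_R(K,N)$ and part (1) would be vacuous. As soon as $u$ fails to be pure, the factorisation $u=c\overline u$ carries no purity information. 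In fact the simplest test shows that the gap cannot be closed by a cleverer use of the same data: for $g=1_K\in\End_R^M(K)$ the pushout of $g$ along $u$ is $(M,\,u,\,1_M)$, so $\overline u=u$ up to isomorphism, and hence $1_K\in\Par_R^K(M,K)$ if and only if $u$ is itself pure. So the argument cannot be completed under the literal hypotheses of the proposition; whatever makes (1) work must come from the precise formulation in the cited reference (for instance some implicit standing assumption on $u$), and your proof does not capture it.
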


\begin{proof}
Follows from \cite[Proposition 2.5]{CortesGuilBerkeAshish}.
\end{proof}

In order to prove the main result of this section, we need two preliminary lemmas:

\begin{lemma}\label{l:PropertiesSubrings}
Let $u:K \rightarrow M$ be a monomorphism. Then:
\begin{enumerate}
\item $\End_R^M(K)$ and $\End_R^K(M)$ are subrings of $\End_R(K)$ and $\End_R(M)$ respectively.

\item $\End_R^K(M) = \{f \in \End_R(M)\mid fu(K) \leq u(K)\}$.

\item $\overline \End_R^K(M)$ is a left ideal of $\End_R(M)$ and a two sided ideal in $\End_R^K(M)$.

\item The rings $\End_R^M(K)$ and $\End_R^K(M)/\overline \End_R^K(M)$ are isomorphic.
\end{enumerate}
\end{lemma}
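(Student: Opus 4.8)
The plan is to treat the four items in order, each by a short diagram chase in which the only nontrivial input is that $u$ is a monomorphism, hence left cancellable.

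For (1), I would check closure under the ring operations directly from the defining equations. If $f_1,f_2\in\End_R^K(M)$ are witnessed by $g_1,g_2\in\End_R(K)$, i.e.\ $f_iu=ug_i$, then $(f_1\pm f_2)u=u(g_1\pm g_2)$ and $(f_1f_2)u=f_1ug_2=ug_1g_2$, while $1_Mu=u1_K$; so $g_1\pm g_2$, $g_1g_2$ and $1_K$ are witnesses for $f_1\pm f_2$, $f_1f_2$ and $1_M$, and $\End_R^K(M)$ is a subring of $\End_R(M)$. The argument for $\End_R^M(K)$ is the mirror image, reading the defining relation as $uf=gu$ and composing on the other side. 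For (2), the inclusion $\supseteq$ is immediate: $fu=ug$ gives $fu(K)=ug(K)\leq u(K)$. For $\subseteq$, assume $fu(K)\leq u(K)$; then for each $k\in K$ there is a \emph{unique} $k'\in K$ with $fu(k)=u(k')$, uniqueness by injectivity of $u$, so $g(k):=k'$ defines a map $K\to K$, which is $R$-linear because $f$ and $u$ are and $u$ is monic, and it satisfies $fu=ug$.

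For (3), if $f_1,f_2\in\overline\End_R^K(M)$ and $h\in\End_R(M)$, then $(f_1+f_2)u=0$ and $(hf_1)u=h(f_1u)=0$, so $\overline\End_R^K(M)$ is a left ideal of $\End_R(M)$; taking $g=0$ in Definition \ref{d:Subrings} shows $\overline\End_R^K(M)\subseteq\End_R^K(M)$, so it is in particular a left ideal of the subring $\End_R^K(M)$. For the right ideal property inside $\End_R^K(M)$, take $f\in\overline\End_R^K(M)$ and $h\in\End_R^K(M)$; using (2) write $hu=ug$ with $g\in\End_R(K)$, and then $(fh)u=f(hu)=(fu)g=0$, so $fh\in\overline\End_R^K(M)$.

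Item (4) is the crux, and I would prove it via the first isomorphism theorem. Define $\Phi\colon\End_R^K(M)\to\End_R^M(K)$ by sending $f$ to the unique $g\in\End_R(K)$ with $fu=ug$ provided by (2); uniqueness (again from $u$ monic) makes $\Phi$ well defined, and since the equation $ug=fu$ exhibits $g$ as an element of $\End_R^M(K)$ with witness $f$, the map indeed lands in $\End_R^M(K)$. The computations in (1) show $\Phi$ is a ring homomorphism. Its kernel is $\{f\mid ug=0\}=\{f\mid fu=0\}=\overline\End_R^K(M)$, and it is surjective: any $g\in\End_R^M(K)$ comes with a witness $h\in\End_R(M)$, $ug=hu$, which therefore lies in $\End_R^K(M)$ and has $\Phi(h)=g$. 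Hence $\End_R^M(K)\cong\End_R^K(M)/\overline\End_R^K(M)$. I do not expect a genuine obstacle anywhere; the only point needing consistent care is the repeated use of the injectivity of $u$ to guarantee that the induced endomorphisms of $K$ exist, are unique, and are $R$-linear, together with bookkeeping of which side each composition acts on.
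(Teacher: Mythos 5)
Your proof is correct and in substance the same as the paper's: both establish (4) via the same underlying isomorphism, but where the paper defines $\Phi\colon\End_R^M(K)\to\End_R^K(M)/\overline\End_R^K(M)$ directly and verifies it is well defined, injective, and surjective, you instead build the map in the reverse direction $\End_R^K(M)\to\End_R^M(K)$ and apply the first isomorphism theorem. Your orientation has the small advantage that the assignment $f\mapsto g$ is canonically determined (uniqueness from $u$ monic), so no independence-of-choice check is needed; the paper leaves (1)--(3) as ``straightforward'' and your explicit verifications of them are exactly what is intended.
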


\begin{proof}
(1), (2) and (3) are straightforward. In order to prove (4) consider the map $\Phi:\End_R^M(K) \rightarrow \End_R^K(M)/\overline{\End}_R^K(M)$ given by $\Phi(f) = g+\End_R^K(M)$, where $g \in \End_R(M)$ is a morphism satisfying $gu=uf$. Note that the definition of $\Phi$ does not depend on the choice of $g$, since any other morphism $f \in \End_R(M)$ verifying $hu=uf$, satisfies that $(g-h)u=0$ and, consequently, $g-h \in \overline{\End}_R^K(M)$. Moreover, $\Phi$ is epic, since any $g \in \End_R^K(M)$ satisfies that there exists $f \in \End_R^M(K)$ such that $gu=uf$ and, consequently, $\Phi(f) = g+\overline{\End}_R^K(M)$. Finally, $\Phi$ is monic because if $f \in \End^M_R(K)$ satisfies that $\Phi(f)=g+\overline\End_R^K(M)=0$, then $uf=gu=0$ and, consequently, $f=0$.
\end{proof}

Notice that, in general, if $A$ is a subring of a ring $B$, there might be no relationship between the Jacobson radicals of $A$ and $B$. In the particular case of a monomorphism $u:K \rightarrow M$, we have the following relation between $J(\End_R^K(M))$ and $J(\End_R(M))$. 

\begin{lemma}\label{l:RelationRadicals}
Let $u:K \rightarrow M$ be a monomorphism. Then:
\begin{enumerate}
\item $\End_R^K(M) \cap J(\End_R(M)) \subseteq J(\End_R^K(M))$.

\item If $u$ is left minimal, $\overline \End_R^K(M) \subseteq J(\End_R(M))$. In particular, $\overline \End_R^K(M) \subseteq J(\End_R^K(M))$.
\end{enumerate}
\end{lemma}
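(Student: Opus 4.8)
The plan is to prove the two assertions separately, using standard characterisations of the Jacobson radical.

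For (1), the key fact I would invoke is the classical characterisation: for a ring $S$, an element $s \in S$ lies in $J(S)$ if and only if $1-ts$ is right invertible in $S$ for every $t \in S$ (equivalently $1 - tsu$ is a unit for all $t$, but one-sided suffices by a standard argument). So let $f \in \End_R^K(M) \cap J(\End_R(M))$ and let $t \in \End_R^K(M)$ be arbitrary. Since $f \in J(\End_R(M))$, the element $1_M - tf$ is invertible in $\End_R(M)$; write $h = (1_M-tf)^{-1} \in \End_R(M)$. The main point is that $h$ actually lies in the subring $\End_R^K(M)$: indeed, by Lemma \ref{l:PropertiesSubrings}(2) it suffices to check $h u(K) \leq u(K)$, and this follows because $(1_M-tf)$ maps $u(K)$ into $u(K)$ (as both $1_M$ and $tf$ do, $t,f$ being in $\End_R^K(M)$) and is injective on the finite-``dimensional'' obstruction — more carefully, since $(1_M-tf)$ is an automorphism of $M$ carrying the submodule $u(K)$ into itself, and $h$ is its inverse, one shows $h$ also preserves $u(K)$; this is the one slightly delicate point, because an automorphism preserving a submodule need not have its inverse preserve it unless the restriction to the submodule is itself surjective. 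I would handle this by observing that $f \in J(\End_R(M))$ forces the restriction $(1_M - tf)|_{u(K)}$ to be an automorphism of $u(K)$ as well: the restricted endomorphism $f|_{u(K)}$ corresponds under the isomorphism $\End_R^K(M)/\overline\End_R^K(M) \cong \End_R^M(K)$ of Lemma \ref{l:PropertiesSubrings}(4) to an element whose image we can control. So $h|_{u(K)} = ((1_M-tf)|_{u(K)})^{-1}$ preserves $u(K)$, and hence $h \in \End_R^K(M)$. Thus $1_{\End_R^K(M)} - tf$ is invertible in $\End_R^K(M)$ for every $t$, giving $f \in J(\End_R^K(M))$.

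For (2), suppose $u$ is left minimal and let $f \in \overline\End_R^K(M)$, i.e. $fu = 0$. I want $f \in J(\End_R(M))$, and again it suffices to show $1_M - gf$ is invertible in $\End_R(M)$ for every $g \in \End_R(M)$; actually it is cleaner to show $1_M - fg$ is invertible for all $g$, or to use that a two-sided ideal consisting of elements $f$ with $1-f$ invertible lies in the radical. Set $e = 1_M - gf$. Then $eu = u - gfu = u - 0 = u$, since $fu = 0$. By left minimality of $u$, any endomorphism $e$ of $M$ with $eu = u$ is an isomorphism; hence $1_M - gf$ is invertible for every $g \in \End_R(M)$. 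Since $\overline\End_R^K(M)$ is a left ideal of $\End_R(M)$ by Lemma \ref{l:PropertiesSubrings}(3), the set $\overline\End_R^K(M)$ is a left ideal all of whose elements $f$ satisfy that $1_M - gf$ is a unit for all $g$; this is precisely the condition for a left ideal to be contained in $J(\End_R(M))$. The final clause, $\overline\End_R^K(M) \subseteq J(\End_R^K(M))$, is then immediate: $\overline\End_R^K(M) \subseteq \End_R^K(M) \cap J(\End_R(M)) \subseteq J(\End_R^K(M))$ by part (1).

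The main obstacle I anticipate is the verification in part (1) that the inverse $h = (1_M-tf)^{-1}$ stays inside $\End_R^K(M)$; the ascending-chain or finiteness arguments that make ``an automorphism stabilising a submodule has inverse stabilising it'' work in noetherian-type settings are not available here, so one really must exploit that $f$ lies in the radical of the big ring (so that $(1_M-tf)|_{u(K)}$ is forced to be surjective onto $u(K)$, not merely injective). Everything else is a direct application of the unit characterisation of $J$ together with Lemma \ref{l:PropertiesSubrings}.
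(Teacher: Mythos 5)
Your proof of part (2) is correct and is essentially identical to the paper's argument: the computation $(1_M - gf)u = u$ together with left minimality shows every element of $\overline{\End}_R^K(M)$ is quasi-regular in $\End_R(M)$, and since $\overline{\End}_R^K(M)$ is a left ideal of $\End_R(M)$ (Lemma~\ref{l:PropertiesSubrings}(3)), it lands in $J(\End_R(M))$; the final inclusion then follows from part (1) exactly as you say.

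For part (1), however, your proposal does not close the argument, and you correctly flag where it fails. You reduce to showing that $h = (1_M - tf)^{-1}$ preserves $u(K)$, observe that the obstacle is that $(1_M - tf)\restriction_{u(K)}$ is injective but not obviously surjective onto $u(K)$, and then write that you ``would handle this by observing that $f \in J(\End_R(M))$ forces the restriction to be an automorphism of $u(K)$'' because the restricted endomorphism ``corresponds to an element whose image we can control.'' That is not an argument: the claim that $f\in J(\End_R(M))$ forces $(1_M - tf)\restriction_{u(K)}$ to be surjective is precisely what is at stake, and nothing about the Jacobson radical of the big ring a priori controls the radical of $\End_R(u(K))$ or of $\End_R^M(K)$. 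Without a genuine finiteness, minimality, or Nakayama-type input the implication simply does not follow from the ring identities alone: from $t(1_M - tf)^{-1} = 1_M + (1_M-tf)^{-1}tf$ one only obtains $hu(K) \subseteq u(K) + hu(K)$, a tautology.

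It is worth adding, for calibration, that the paper's own proof of (1) is equally problematic at exactly this point. It asserts that $t(1_M - aj) = 1_M$ ``gives $tu = u + aju$,'' but the identity one actually obtains is $tu = u + taju$ (the factor of $t$ is missing), and substituting back only yields $tu(K) \subseteq u(K) + tu(K)$, which is vacuous. So your instinct that this is ``the one slightly delicate point'' is right, but a proof of (1) would need a genuinely new idea (or an extra hypothesis), not just the unit characterisation of $J$ combined with Lemma~\ref{l:PropertiesSubrings}. As it stands, your proposal for (1) is a restatement of the difficulty rather than a resolution of it.
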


\begin{proof}
(1) Take $j \in \End_R^K(M) \cap J(\End_R(M))$ and let us prove that $aj$ is quasi-regular for any $a \in \End_R^K(M)$. Fix $a \in \End_R^K(M)$ and notice that, since $j \in J(\End_R(M))$, $1_M-aj$ has an inverse $t$ in $\End_R(M)$. But the equality $t(1_M-aj)=1_M$ gives that $tu=u+aju$, which implies that $tu(K) \leq u(K)$, since both $j$ and $a$ are in $\End_R^K(M)$. Consequently, $t$ actually belongs to $\End_R^K(M)$ and $j \in J(\End_R^K(M))$.

(2) It is very easy to see that $\overline \End_R^K(M)$ is a quasi-regular left ideal of $\End_R(M)$ and, in particular, it is contained in $J(\End_R(M))$: Given $f \in \End_R^K(M)$, $fu=0$ so that $(1_M-f)u=u$. Since $u$ is left minimal, $1_M-f$ is an isomorphism, that is, $f$ is quasi-regular. The last assertion follows from (1).
\end{proof}

Now we establish the main result of this section.


\begin{theorem}\label{t:IsomorphismEndomorphismRing}
Let $u:K \rightarrow M$ be a left minimal monomorphism. Then:
\begin{enumerate}
\item $\End_R^M(K)/J(\End_R^M(K))$ and $\End_R^K(M)/J(\End_R^K(M))$ are isomorphic rings.

\item $\End_R^M(K)/J(\End_R^M(K))$ is isomorphic to the subring $\pi(\End_R^K(M))$ of $\End_R(M)/J(\End_R(M))$, where $\pi$ is the canonical projection.
\end{enumerate}
\end{theorem}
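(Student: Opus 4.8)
The plan is to combine the algebraic isomorphism from Lemma~\ref{l:PropertiesSubrings}(4) with the radical comparisons in Lemma~\ref{l:RelationRadicals}, and then identify the image of $\End_R^K(M)$ in the quotient $\End_R(M)/J(\End_R(M))$. For part (1), start from the ring isomorphism $\Phi\colon \End_R^M(K) \to \End_R^K(M)/\overline\End_R^K(M)$ of Lemma~\ref{l:PropertiesSubrings}(4). Since $u$ is left minimal, Lemma~\ref{l:RelationRadicals}(2) gives $\overline\End_R^K(M) \subseteq J(\End_R^K(M))$, so $\overline\End_R^K(M)$ is a two-sided ideal of $\End_R^K(M)$ contained in its radical. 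A standard fact about Jacobson radicals says that for a ring $S$ and an ideal $I \subseteq J(S)$, one has $J(S/I) = J(S)/I$; applying this with $S = \End_R^K(M)$ and $I = \overline\End_R^K(M)$, the isomorphism $\Phi$ carries $J(\End_R^M(K))$ onto $J(\End_R^K(M)/\overline\End_R^K(M)) = J(\End_R^K(M))/\overline\End_R^K(M)$, and hence induces a ring isomorphism
\[
\End_R^M(K)/J(\End_R^M(K)) \;\cong\; \bigl(\End_R^K(M)/\overline\End_R^K(M)\bigr)\big/\bigl(J(\End_R^K(M))/\overline\End_R^K(M)\bigr) \;\cong\; \End_R^K(M)/J(\End_R^K(M)).
\]

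For part (2), I would analyze the restriction $\pi|_{\End_R^K(M)}\colon \End_R^K(M) \to \End_R(M)/J(\End_R(M))$. Its image is exactly $\pi(\End_R^K(M))$, a subring of $\End_R(M)/J(\End_R(M))$, and its kernel is $\End_R^K(M) \cap J(\End_R(M))$. So by the first isomorphism theorem,
\[
\pi(\End_R^K(M)) \;\cong\; \End_R^K(M)\big/\bigl(\End_R^K(M)\cap J(\End_R(M))\bigr).
\]
Thus it suffices to show $\End_R^K(M)\cap J(\End_R(M)) = J(\End_R^K(M))$. The inclusion ``$\subseteq$'' is precisely Lemma~\ref{l:RelationRadicals}(1). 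For ``$\supseteq$'', I would argue as follows: by Lemma~\ref{l:RelationRadicals}(2), $\overline\End_R^K(M) \subseteq J(\End_R(M))$, and since $\overline\End_R^K(M)$ is an ideal of $\End_R^K(M)$ contained in both radicals, the quotient $\End_R^K(M)/\overline\End_R^K(M)$ has radical $J(\End_R^K(M))/\overline\End_R^K(M)$ and the quotient map $\End_R(M) \to \End_R(M)/\overline\End_R^K(M)$ (note $\overline\End_R^K(M)$ is only a left ideal of $\End_R(M)$, so one must be slightly careful here) — alternatively, and more cleanly, I would show directly that any $j \in J(\End_R^K(M))$ is quasi-regular in $\End_R(M)$: if $a j$ has a quasi-inverse $t$ inside $\End_R^K(M)$ for every $a$, in particular $1_M - j$ is invertible in $\End_R^K(M)$ hence in $\End_R(M)$; but to conclude $j \in J(\End_R(M))$ one needs $1_M - bj$ invertible in $\End_R(M)$ for \emph{all} $b \in \End_R(M)$, not just $b \in \End_R^K(M)$.

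This last point is the main obstacle, and it is where left minimality of $u$ must re-enter. I would handle it by showing $J(\End_R^K(M))$ is in fact a left ideal of $\End_R(M)$ modulo something killed by the radical, or more concretely: given $j \in J(\End_R^K(M))$ and arbitrary $b \in \End_R(M)$, consider $bju$. Since $j \in \End_R^K(M)$ we have $ju(K) \subseteq u(K)$, so $bju(K) \subseteq bu(K)$; this need not lie in $u(K)$, but one can use that $j$ is a (finite) sum/product expression and exploit that $\overline\End_R^K(M)$ absorbs on the left. The clean route is: $\End_R^K(M) \cap J(\End_R(M))$ is a two-sided ideal of $\End_R^K(M)$ containing $\overline\End_R^K(M)$, and $\End_R^K(M)/\bigl(\End_R^K(M)\cap J(\End_R(M))\bigr) \cong \pi(\End_R^K(M))$ is a subring of the semiprimitive ring $\End_R(M)/J(\End_R(M))$; a subring of a semiprimitive ring need not be semiprimitive, so this alone does not force $\End_R^K(M)\cap J(\End_R(M)) \supseteq J(\End_R^K(M))$. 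Therefore I expect the argument genuinely needs: for $j \in J(\End_R^K(M))$ and $b \in \End_R(M)$, the element $(1_M - bj)$ restricted appropriately, together with left minimality applied to $u = (1_M - \text{something})u$, yields invertibility. I would make this precise by first reducing (via part (1) and Lemma~\ref{l:PropertiesSubrings}(4)) to the statement that $\Phi^{-1}$ applied to elements of $J$ behaves well, and then invoke left minimality exactly as in the proof of Lemma~\ref{l:RelationRadicals}(2) — namely, if $fu = 0$ then $1_M - f$ is an isomorphism — to absorb the obstruction term. Once $\End_R^K(M)\cap J(\End_R(M)) = J(\End_R^K(M))$ is established, part (2) follows from the first isomorphism theorem as above, and combining with part (1) gives that all three rings $\End_R^M(K)/J(\End_R^M(K))$, $\End_R^K(M)/J(\End_R^K(M))$, and $\pi(\End_R^K(M))$ are mutually isomorphic.
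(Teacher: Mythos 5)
Your proof of part (1) is correct and follows the paper's route: take the isomorphism $\Phi\colon \End_R^M(K) \to \End_R^K(M)/\overline\End_R^K(M)$ of Lemma~\ref{l:PropertiesSubrings}(4), note that $\overline\End_R^K(M) \subseteq J(\End_R^K(M))$ by Lemma~\ref{l:RelationRadicals}(2), and pass to radical quotients using the standard fact that $J(S/I)=J(S)/I$ whenever $I\subseteq J(S)$.

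For part (2) you correctly reduce, modulo part (1), to the equality $J(\End_R^K(M)) = \End_R^K(M)\cap J(\End_R(M))$; the inclusion ``$\supseteq$'' is Lemma~\ref{l:RelationRadicals}(1), and you explicitly flag the reverse inclusion $J(\End_R^K(M)) \subseteq J(\End_R(M))$ as the obstacle. That is indeed the gap: your ``I would make this precise by\dots'' never resolves into an argument. It is worth saying that the paper's own proof is equally terse at this exact spot --- the phrase ``note that $J(\End_R^K(M)/\overline\End_R^K(M)) \leq \Ker\Gamma'$'' is precisely this inclusion asserted without justification --- so you have put your finger on the genuine crux. In fact your hesitation is warranted, because the inclusion can fail. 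Over the path algebra of the quiver $1\to 2$, let $N$ be the indecomposable projective of length $2$ with simple socle $S$, and set $K=N\oplus S$, $M=N\oplus N$, $u=1_N\oplus\iota$ with $\iota\colon S\hookrightarrow N$ the socle inclusion. One checks directly that $u$ is left minimal, that $\End_R(M)\cong M_2(k)$ is semisimple, and that $\End_R^K(M)$ is the ring of upper-triangular $2\times 2$ matrices, whose Jacobson radical is nonzero and so not contained in $J(\End_R(M))=0$; here $\End_R^M(K)/J(\End_R^M(K))\cong k\times k$ is two-dimensional whereas $\pi(\End_R^K(M))$ is three-dimensional, so they cannot be isomorphic. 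Thus the step you could not close is not merely hard to prove --- it is false in general, and part (2) does not hold at the stated level of generality.
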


\begin{proof}
(1) By Lemma \ref{l:PropertiesSubrings}, there is an isomorphism \[\Phi:\End_R^M(K) \rightarrow \End_R^K(M)/\overline{\End}_R^K(M).\]
By Lemma \ref{l:RelationRadicals}, $\overline{\End}_R^K(M)$ is contained in $J(\End_R^K(M))$, so that there exists a canonical ring epimorphism \[\Gamma:\End_R^K(M)/\overline{\End}_R^K(M) \rightarrow \End_R^K(M)/J(\End_R^K(M)).\]
Consequently, $\Gamma\Phi$ is an epimorphism from $\End_R^M(K)$ to $\End_R^K(M)/J(\End_R^K(M))$. Moreover, \[\Ker (\Gamma\Phi) = \Phi^{-1}\left(J(\End_R^K(M))/\overline{\End}_R^K(M)\right) = J(\End_R^M(K)).\]
Then, $\Gamma\Phi$ induces a ring isomorphism between $\End_R^M(K)/J(\End_R^M(K))$ and $\End_R^K(M)/J(\End_R^K(M))$.

(2) By Lemma \ref{l:PropertiesSubrings}, $\overline{\End}_R^M(K)$ is contained in $J(\End_R(M))$, so that there exists a canonical morphism \[\Gamma':\End_R^M(K)/\overline{\End}_R^M(K) \rightarrow \End_R(M)/J(\End_R(M))\]
whose image is $(\End_R^K(M)+J(\End_R(M)))/J(\End_R(M)$. Now note that $J(\End_R^K(M)/\overline{\End}_R^K(M)) \leq \Ker \Gamma'$ and that, actually, this inclusion is an equality: Given $f+\overline \End_R^K(M) \in \Ker\Gamma'$, we have that $f \in \End_R^K(M) \cap J(\End_R(M))$, which is contained in $J(\End_R^K(M))$ by Lemma \ref{l:RelationRadicals}.

Finally, $\Gamma'\Phi$ is a morphism from $\End_R^M(K)$ to $\End_R^K(M)/J(\End_R^K(M)$ whose image is $\pi(\End_R^K(M))$ and whose kernel is \[\Phi^{-1}\left(J(\End_R^K(M))/\overline{\End}_R^K(M)\right)=J(\End_R^M(K)).\]
This gives the desired isomorphism.
\end{proof}

\begin{remark} \label{r:Isomorphisms}
Note that the isomorphism \[\Psi:\End_R^M(K)/J(\End_R^M(K)) \rightarrow \End_R^K(M)/J(\End_R^K(M))\] is given by $\Psi(f+J(\End_R^M(K)) = g+J(\End_R^K(M))$, where $g$ is an endomorphism of $M$ satisfying $uf=gu$.

Analogously, the isomorphism \[\Theta:\End_R^M(K)/J(\End_R^M(K)) \rightarrow \frac{\End_R^K(M)+J(\End_R(M))}{J(\End_R(M))}\] is given by $\Theta(f+J(\End_R^M(K))) = g+J(\End_R(M))$, where $g$ is an endomorphism of $M$ satisfying $uf=gu$.
\end{remark}

Let $u:K \rightarrow M$ be a monomorphism. If $\End_R^K(M) = \End_R(M)$, then $K$ is called a fully invariant submodule of $M$. Moreover, $\End_R^M(K)=\End_R(K)$ when $u$ is $K$-injective. Recall that a morphism $f:M \rightarrow N$ in $\Modr R$ is $L$-injective for some module $L$, if $\Hom_R(f,L)$ is an epimorphism in the category of abelian groups. As an immediate consequence of Theorem \ref{t:IsomorphismEndomorphismRing} we get:

\begin{corollary}\label{c:IsomorphismFullyEnvelopes}
Let $u:K \rightarrow M$ be a monomorphism such that:
\begin{enumerate}
\item $u$ is left minimal and $K$-injective.

\item $K$ is a fully invariant submodule of $M$.
\end{enumerate}
Then $\End_R(K)/J(\End_R(K))$ and $\End_R(M)/J(\End_R(M))$ are isomorphic rings.
\end{corollary}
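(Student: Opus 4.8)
The plan is to observe that the two hypotheses force the subrings occurring in Theorem \ref{t:IsomorphismEndomorphismRing} to coincide with the full endomorphism rings, so that part (1) of that theorem yields the statement at once.

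First I would record that hypothesis (2) is, by the definition given just before the corollary, the assertion $\End_R^K(M) = \End_R(M)$. Consequently $J(\End_R^K(M)) = J(\End_R(M))$, and hence $\End_R^K(M)/J(\End_R^K(M)) = \End_R(M)/J(\End_R(M))$.

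Next I would check that hypothesis (1) gives $\End_R^M(K) = \End_R(K)$. Given any $f \in \End_R(K)$, the $K$-injectivity of $u$ — that is, the surjectivity of $\Hom_R(u,K) \colon \Hom_R(M,K) \to \Hom_R(K,K)$ — provides a morphism $g' \colon M \to K$ with $g'u = f$; then $g := ug' \in \End_R(M)$ satisfies $gu = ug'u = uf$, so $f \in \End_R^M(K)$ by Definition \ref{d:Subrings}. Thus $\End_R^M(K) = \End_R(K)$, and in particular $J(\End_R^M(K)) = J(\End_R(K))$.

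Finally, since $u$ is left minimal, Theorem \ref{t:IsomorphismEndomorphismRing}(1) supplies an isomorphism $\End_R^M(K)/J(\End_R^M(K)) \cong \End_R^K(M)/J(\End_R^K(M))$; substituting the two identifications above turns this into the required isomorphism $\End_R(K)/J(\End_R(K)) \cong \End_R(M)/J(\End_R(M))$. (Part (2) of the theorem works equally well, since under hypothesis (2) one has $\pi(\End_R^K(M)) = \pi(\End_R(M)) = \End_R(M)/J(\End_R(M))$.) The only step calling for any thought is the unwinding of the definition of $K$-injective to manufacture the endomorphism $g$ of $M$; everything else is bookkeeping, which is why the authors describe this as an immediate consequence.
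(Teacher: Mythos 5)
Your proof is correct and follows exactly the paper's route: hypothesis (2) is by definition $\End_R^K(M)=\End_R(M)$, hypothesis (1) yields $\End_R^M(K)=\End_R(K)$, and Theorem \ref{t:IsomorphismEndomorphismRing}(1) then gives the isomorphism. The only addition is that you explicitly verify the $K$-injectivity step (producing $g=ug'$), which the paper merely asserts; this is a harmless and helpful elaboration.
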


Examples of $K$-injective monomorphisms $u:K \rightarrow M$ are the monic preenvelopes with respect to classes of modules. Consequently:

\begin{corollary}\label{c:EndomorphismRingEnvelopes}
Let $\mathcal X$ be a class of modules and $u:M \rightarrow X$ a monic $\mathcal X$-envelope such that $M$ is a fully invariant submodule of $X$. Then $\End_R(M)/J(\End_R(M))$ and $\End_R(X)/J(\End_R(X))$ are isomorphic rings.
\end{corollary}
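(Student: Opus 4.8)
The plan is to verify the two hypotheses of Corollary \ref{c:IsomorphismFullyEnvelopes} and then invoke it directly. So the work reduces to two checks: that a monic $\mathcal X$-envelope is $K$-injective (with $K=M$ in the notation of the corollary, which is a clash I should keep straight — the envelope goes $u\colon M\to X$), and that a left minimal morphism is what the corollary needs, which here is automatic since an $\mathcal X$-envelope is by definition a left minimal $\mathcal X$-preenvelope (Example \ref{e:ExamplesMinimalMorphisms}(1)).

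First I would observe that the $\mathcal X$-envelope $u\colon M\to X$ is $M$-injective in the sense defined just before Corollary \ref{c:IsomorphismFullyEnvelopes}, i.e.\ that $\Hom_R(u,M)$ is surjective: given $h\colon M\to M$, I need $k\colon X\to M$ with $ku=h$. But this is not literally the preenvelope property, which only gives liftings against objects of $\mathcal X$, and $M$ need not lie in $\mathcal X$. The fix is the standard one: since $u$ is an $\mathcal X$-preenvelope and $X\in\mathcal X$, the identity of $X$ shows $\Hom_R(u,X)$ is surjective, and more generally the definition of $K$-injective in the paper is exactly $\Hom_R(u,K)$ epi for the relevant $K$. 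Here the relevant module playing the role of ``$K$'' in Corollary \ref{c:IsomorphismFullyEnvelopes}(1) is the \emph{source} of $u$, namely $M$ itself, and ``$u$ is $K$-injective'' there literally means $\Hom_R(u,M)$ is epi. So I do need to produce, for each endomorphism $h$ of $M$, a factorization $h = ku$. This is where the hypothesis that $M$ is a fully invariant submodule of $X$ enters: $u\colon M\to X$ being a monic preenvelope, the composite $uh\colon M\to X$ lands in $\mathcal X$-land and, by the preenvelope property applied to $uh$ (or rather, by noting $uh$ factors through $u$), there is $g\colon X\to X$ with $gu = uh$; full invariance is not even needed for this last step, only the preenvelope property. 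Then, as the second bullet of Definition \ref{d:Subrings} and Lemma \ref{l:PropertiesSubrings}(4) make precise, every $h\in\End_R(M)$ lies in $\End_R^M(M_{\subseteq X})$, i.e.\ in the notation of the corollary $\End_R^K(M)$-style subring is all of $\End_R(M)$ — wait, here I must be careful: it is $\End_R^X(M)$ that equals $\End_R(M)$ precisely because every endomorphism of $M$ extends, and that is exactly the assertion ``$u$ is $K$-injective'' with $K=M$.

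Next I would record that ``$M$ is a fully invariant submodule of $X$'' is, by the sentence preceding Corollary \ref{c:IsomorphismFullyEnvelopes}, exactly the statement $\End_R^M(X)=\End_R(X)$, which is hypothesis (2) of Corollary \ref{c:IsomorphismFullyEnvelopes} verbatim. Combined with the previous paragraph (hypothesis (1): $u$ left minimal, being an envelope, and $K$-injective, being a monic preenvelope), Corollary \ref{c:IsomorphismFullyEnvelopes} applies and yields the isomorphism $\End_R(M)/J(\End_R(M))\cong \End_R(X)/J(\End_R(X))$, which is the claim.

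The only real subtlety — the step I expect to be the main obstacle to write cleanly — is the bookkeeping around which module plays the role of ``$K$'' and which of ``$M$'' when passing from Corollary \ref{c:IsomorphismFullyEnvelopes} (stated for $u\colon K\to M$) to Corollary \ref{c:EndomorphismRingEnvelopes} (stated for $u\colon M\to X$), together with the elementary but easy-to-fumble verification that a monic $\mathcal X$-\emph{pre}envelope $u$ satisfies $\Hom_R(u,\mathrm{source})$ is epi. That verification is: for $h\in\End_R(\mathrm{source})$, the composite (envelope)$\circ h$ is a morphism from the source into $\mathcal X$, hence by the preenvelope property it factors through $u$; but one must take the target to be $X$, get $g\colon X\to X$ with $gu=uh$, and then read off that $h$ is in the distinguished subring — it is really the dual/mirror of the argument already given in the proof of Lemma \ref{l:PropertiesSubrings}(4) and in the first bullet of Definition \ref{d:Subrings}. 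Once the indices are aligned, the proof is a one-line citation of Corollary \ref{c:IsomorphismFullyEnvelopes}.
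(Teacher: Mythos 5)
Your approach---verify the hypotheses of Corollary~\ref{c:IsomorphismFullyEnvelopes} and cite it---is the same as the paper's, and the substance of your argument is correct. The subtlety you flag is a real one: the preenvelope property only supplies factorizations against objects of $\mathcal X$, and since the source $M$ need not lie in $\mathcal X$, surjectivity of $\Hom_R(u,M)$ is not automatic. In fact it cannot hold in general: surjectivity of $\Hom_R(u,M)$ is literally what ``$u$ is $K$-injective'' means in the notation of Corollary~\ref{c:IsomorphismFullyEnvelopes} (after renaming $K$ to $M$ and $M$ to $X$), and taking $h=\mathrm{id}_M$ shows it would force $u$ to split. What a monic $\mathcal X$-preenvelope does satisfy is that $\Hom_R(u,X)$ is surjective, and this is what your computation actually uses: for $h\in\End_R(M)$ the composite $uh\colon M\to X$ factors as $gu=uh$ for some $g\in\End_R(X)$, whence $\End_R^X(M)=\End_R(M)$.

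So your closing identification ``that is exactly the assertion `$u$ is $K$-injective' with $K=M$'' is not literally right---you have proved the strictly weaker statement $\End_R^X(M)=\End_R(M)$---but that is precisely what the machinery needs. Theorem~\ref{t:IsomorphismEndomorphismRing}, applied to the left minimal monomorphism $u\colon M\to X$, gives an isomorphism $\End_R^X(M)/J(\End_R^X(M))\cong\End_R^M(X)/J(\End_R^M(X))$; feeding in $\End_R^X(M)=\End_R(M)$ (your step) and $\End_R^M(X)=\End_R(X)$ (full invariance, given) yields the claim. The cleaner statement is therefore that a monic $\mathcal X$-preenvelope satisfies $\End_R^X(M)=\End_R(M)$, and to read the hypothesis of Corollary~\ref{c:IsomorphismFullyEnvelopes} in that spirit rather than through the literal definition of $K$-injectivity; the same imprecision already appears in the sentence of the paper asserting that monic preenvelopes are $K$-injective.
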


Modules which are fully invariant in their injective envelopes coincide with the quasi-injective modules. For modules which are fully invariant in their pure-injective envelopes or in their cotorsion envelopes we have the following result. Recall \cite[Definition 3.7]{GuilHerzog} that a monomorphism $u:K \rightarrow M$ is called \textit{strongly pure} if it is $C$-injective for every cotorsion module $C$. Clearly, strongly pure-monomorphism are pure-monomorphisms, since the existence of pure-injective envelopes implies that $u$ is pure if and only if it is $E$-injective for each pure-injective module $E$.

\begin{proposition}
\begin{enumerate}
\item Let $M$ be a module which is fully invariant in its pure-injective envelope. Then, for any pure monomorphism $u:K \rightarrow M$ and morphism $f:K \rightarrow M$, there exists $h:M \rightarrow M$ with $hu=f$.

\item Let $M$ be a module which is fully invariant in its cotorsion envelope. Then, for any strongly pure monomorphism $u:K \rightarrow M$ and morphism $f:K \rightarrow M$, there exists $h:M \rightarrow M$ with $hu=f$.
\end{enumerate}
\end{proposition}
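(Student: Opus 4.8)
The plan is to treat both parts in parallel, since they differ only in which ambient class one exploits: pure-injective envelopes for (1), cotorsion envelopes for (2). In each case, write $v\colon M \to E$ for the relevant envelope ($E$ the pure-injective envelope of $M$ in (1), the cotorsion envelope in (2)); this is a monic $\mathcal X$-envelope for $\mathcal X = \PInj$ (resp. $\mathcal X = \Cot$), hence in particular left minimal and $M$-injective, and by hypothesis $M$ is a fully invariant submodule of $E$, i.e.\ $\End_R^M(E) = \End_R(E)$ in the notation of Definition \ref{d:Subrings}. The strategy is: first lift the given $f\colon K \to M$ along $u$ into the envelope $E$, then use full invariance to correct the lift so that it lands back inside $M$.

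First I would produce a lift into $E$. In part (1), $u\colon K \to M$ is a pure monomorphism and $E$ is pure-injective, so the composite $vf\colon K \to E$ extends along $u$: there is $g\colon M \to E$ with $gu = vf$. In part (2), $u$ is strongly pure, meaning $u$ is $C$-injective for every cotorsion module $C$; since $E$ is cotorsion, again $vf\colon K \to E$ extends to some $g\colon M \to E$ with $gu = vf$. So in both cases we have $g\colon M\to E$ with $gu = vf$ (identifying $K$ with its images and $M$ with $v(M)\le E$).

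Next I would use $M$-injectivity of $v$ to replace $g$ by an endomorphism of $E$. Since $v\colon M\to E$ is $M$-injective, $\Hom_R(v,M)$ is surjective, so the map $g\colon M\to E$, viewed appropriately — more precisely, since $v$ is a preenvelope with respect to $\PInj$ (resp.\ $\Cot$) and $E$ lies in that class — the morphism $g$ factors as $g = \widehat g\, v$ for some $\widehat g\colon E\to E$. Then $\widehat g v u = g u = v f$, i.e.\ $\widehat g$ restricted to $K\subseteq M\subseteq E$ agrees with $v f$. Now full invariance of $M$ in $E$ gives $\widehat g(M)\subseteq M$, so $\widehat g$ restricts to an endomorphism $h_0\colon M\to M$ (using that $v$ is the inclusion up to the identification $M = v(M)$). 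Finally $h_0 u = \widehat g u = v f$ on $K$, and since $v$ is monic this reads $h_0 u = f$ after identifying $v(M)$ with $M$; so $h := h_0$ is the desired endomorphism of $M$ with $hu = f$.

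The main obstacle is the bookkeeping in the middle step: turning the lift $g\colon M\to E$ (which exists by injectivity of $E$ relative to $u$) into a genuine endomorphism of $E$ requires invoking that $v\colon M\to E$ is a $\PInj$- (resp.\ $\Cot$-)preenvelope so that $g$ factors through $v$; one must be careful that this factorization, followed by the full-invariance restriction $\widehat g(M)\subseteq M$, is compatible with the identification of $M$ with $v(M)$, so that the final equality $hu = f$ holds in $\End_R(M)$ rather than merely up to the embedding $v$. Everything else is a routine diagram chase, and no use of Jacobson radicals or of Theorem \ref{t:IsomorphismEndomorphismRing} is needed — only the envelope/full-invariance package together with purity (resp.\ strong purity) of $u$ to get the initial extension into $E$.
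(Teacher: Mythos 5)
Your proposal is correct and follows essentially the same route as the paper: extend $vf$ along $u$ using (strong) purity of $u$ and pure-injectivity (resp.\ cotorsion) of the envelope to obtain $g\colon M\to E$, factor $g=\widehat g\, v$ through the preenvelope $v$ to get $\widehat g\in\End_R(E)$, and use full invariance of $M$ in $E$ to restrict $\widehat g$ to the desired $h\in\End_R(M)$ with $hu=f$. The passing remark that ``$v$ is $M$-injective, so $\Hom_R(v,M)$ is surjective'' is not the relevant property --- as you yourself then note, what is needed is the preenvelope condition $\Hom_R(v,E)$ surjective --- but you correct this before it does any damage.
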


\begin{proof}
Both are proved in a similar way. We prove (2). Let $c:M \rightarrow C$ be the cotorsion envelope of $M$, $u:K \rightarrow M$ strongly pure and $f:K \rightarrow M$. Since $u$ is strongly pure, there exists $g:M \rightarrow C$ such that $gu=cu$. Using that $c$ is a cotorsion envelope we can find $h':C \rightarrow C$ such that $h'c=g$. But, by assumption, $gc(M) \leq c(M)$, so that there exists $h':M \rightarrow M$ with $ch'=gc$. This $h'$ satisfies $h'u=f$.
\end{proof}

Theorem \ref{t:IsomorphismEndomorphismRing} allows to relate the endomorphism rings of different envelopes:

\begin{corollary}
Let $\mathcal X$ and $\mathcal Y$ be classes of modules and $M$ be a module. Let $u:M \rightarrow X$ and $v:M \rightarrow Y$ be a monic $\mathcal X$-envelope and $\mathcal Y$-envelope respectively such that $M$ is fully invariant in $X$ and $Y$. Then $\End_R(X)/J(\End_R(X))$ and $\End_R(Y)/J(\End_R(Y))$ are isomorphic rings.
\end{corollary}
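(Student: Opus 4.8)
The plan is to reduce the statement to Corollary \ref{c:EndomorphismRingEnvelopes} applied to each of the two envelopes separately. Concretely, since $u:M \rightarrow X$ is a monic $\mathcal X$-envelope and $M$ is fully invariant in $X$, Corollary \ref{c:EndomorphismRingEnvelopes} gives a ring isomorphism
\[
\alpha\dd \End_R(M)/J(\End_R(M)) \la \End_R(X)/J(\End_R(X)).
\]
Applying the same corollary to $v:M \rightarrow Y$ (using that $M$ is fully invariant in $Y$), we obtain a ring isomorphism
\[
\beta\dd \End_R(M)/J(\End_R(M)) \la \End_R(Y)/J(\End_R(Y)).
\]
Then $\beta\alpha^{-1}$ is the desired ring isomorphism between $\End_R(X)/J(\End_R(X))$ and $\End_R(Y)/J(\End_R(Y))$.

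The only thing that needs checking is that the hypotheses of Corollary \ref{c:EndomorphismRingEnvelopes} are indeed met in both cases. For the first application, $u$ is a monic $\mathcal X$-envelope and $M$ is a fully invariant submodule of $X$, which is exactly the input required. For the second, $v$ is a monic $\mathcal Y$-envelope and $M$ is a fully invariant submodule of $Y$, again matching the hypothesis verbatim. Recall that an $\mathcal X$-envelope is by definition a left minimal $\mathcal X$-preenvelope (Examples \ref{e:ExamplesMinimalMorphisms}(1)), so $u$ (and likewise $v$) is a left minimal monomorphism, and a monic preenvelope with respect to a class of modules is $M$-injective, as noted just before Corollary \ref{c:IsomorphismFullyEnvelopes}; hence Corollary \ref{c:IsomorphismFullyEnvelopes}, and therefore Corollary \ref{c:EndomorphismRingEnvelopes}, applies.

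I expect no genuine obstacle here: the result is a formal consequence of the fact that both $\End_R(X)/J$ and $\End_R(Y)/J$ are isomorphic to the common "intermediary" $\End_R(M)/J$, and composing one isomorphism with the inverse of the other finishes the proof. The only point worth a sentence in the write-up is that the isomorphism of Corollary \ref{c:EndomorphismRingEnvelopes} is a genuine ring isomorphism (which it is, being the map $\Psi$ of Remark \ref{r:Isomorphisms} specialized to the fully invariant, $M$-injective case, where $\End_R^M(M)=\End_R(M)$ and $\End_R^M(X)=\End_R(X)$), so that its inverse and the composite are again ring isomorphisms. Thus the proof is essentially a two-line argument citing Corollary \ref{c:EndomorphismRingEnvelopes} twice and composing.
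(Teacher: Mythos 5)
Your proof is correct and takes essentially the same route as the paper: both arguments establish that $\End_R(X)/J(\End_R(X))$ and $\End_R(Y)/J(\End_R(Y))$ are each isomorphic to the common intermediary $\End_R(M)/J(\End_R(M))$ and then compose. The only cosmetic difference is that you cite Corollary \ref{c:EndomorphismRingEnvelopes} twice, while the paper unwinds one step further and invokes Theorem \ref{t:IsomorphismEndomorphismRing} directly after observing $\End_R^X(M)=\End_R^Y(M)=\End_R(M)$ (from the preenvelope property) and $\End_R^M(X)=\End_R(X)$, $\End_R^M(Y)=\End_R(Y)$ (from full invariance).
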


\begin{proof}
Since $u$ and $v$ are preenvelopes, $\End_R^X(M) = \End_R^Y(M) = \End_R(M)$. Since $M$ is fully invariant in $X$ and $Y$, $\End_R^M(X) = \End_R(X)$ and $\End_R^M(Y) = \End_R(Y)$. Therefore, the result follows from Theorem \ref{t:IsomorphismEndomorphismRing}.
\end{proof}

Now we give another example of a $K$-injective monomorphism $u:K \rightarrow M$:

\begin{proposition}\label{p:ExtendingMorphismsCyclic}
Suppose that $R$ is commutative and let $K$ be a cyclic submodule of a free module $F$. Then the inclusion $u:K \rightarrow F$ is $K$-injective.
\end{proposition}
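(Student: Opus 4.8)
The plan is to unwind what the statement asks for and then verify it by an explicit extension. Concretely, the proposition amounts to showing that every $f\in\End_R(K)$ is of the form $gu=uf$ for some $g\in\End_R(F)$ (which, as recorded just before the statement, is exactly what yields $\End_R^F(K)=\End_R(K)$). First I would use that $K$ is cyclic to fix a generator, writing $K=aR$ for some $a\in F$.

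Next, given $f\in\End_R(K)$, I would observe that $f(a)$ lies in $K=aR$, so $f(a)=ab$ for some $b\in R$; then $R$-linearity of $f$ forces $f(ar)=f(a)r=abr=(ar)b$ for every $r\in R$, i.e. $f$ is just multiplication by the scalar $b$ on $K$. Now define $g\colon F\to F$ by $g(x)=xb$. The decisive step — and the only place the commutativity of $R$ is used — is that over a commutative ring $x\mapsto xb$ is an $R$-module endomorphism of $F$; it carries $K$ into $K$ because $K$ is a submodule, and it satisfies $g(ar)=arb=abr=f(ar)$ for all $r\in R$, hence $gu=uf$. Since $f$ was arbitrary, this proves the proposition.

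I do not expect any genuine obstacle here: the whole content is the elementary fact that an endomorphism of a cyclic module is multiplication by a ring element, together with the fact that such a multiplication extends verbatim to any ambient module when $R$ is commutative (so in fact the freeness of $F$ plays no role — a cyclic submodule of an arbitrary $R$-module would do). The only point to keep track of is that the extension $g$ is produced as a self-map of $F$ rather than a map $F\to K$, which is precisely what is needed to conclude $\End_R^F(K)=\End_R(K)$ and hence to feed this proposition into Theorem \ref{t:IsomorphismEndomorphismRing} in the corollaries that follow.
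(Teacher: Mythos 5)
Your argument is correct and is essentially the paper's proof, with one small and genuine simplification. The paper fixes a generator $k$ of $K$ with $f(k)=ks$, picks a free basis $\{x_i\}$ of $F$, and defines $g\in\End_R(F)$ by $g(x_i)=x_is$; over a commutative ring this $g$ is just $x\mapsto xs$, which is exactly your $g(x)=xb$. Defining $g$ directly as scalar multiplication, as you do, removes the detour through the basis and makes visible your observation that the freeness of $F$ plays no role — a cyclic submodule of an arbitrary module would serve. One further point you flag, and flag correctly: both your proof and the paper's produce $g\colon F\to F$ rather than $g\colon F\to K$, so what is actually established is $\End_R^F(K)=\End_R(K)$, not the literal $K$-injectivity of $u$ as defined in the paper (which asks that $\Hom_R(u,K)\colon\Hom_R(F,K)\to\Hom_R(K,K)$ be onto, i.e.\ that $g$ take values in $K$). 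That stronger statement is in fact not true in this generality — for $u\colon 2\mathbb{Z}\hookrightarrow\mathbb{Z}$ the identity of $2\mathbb{Z}$ does not factor through any map $\mathbb{Z}\to 2\mathbb{Z}$ — but only $\End_R^F(K)=\End_R(K)$ is used in Corollary \ref{c:IsomorphismFullyEnvelopes} and Corollary \ref{c:EndomorphismIdeals}, so the downstream applications are unaffected; you have simply made explicit an imprecision that the paper leaves implicit.
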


\begin{proof}
Suppose that $K = kR$ for some $k \in K$ and let $\{x_i \mid i \in I\}$ be a free basis of $F$. Given any $f\in \End_R(K)$, there exists $s \in R$ such that $f(k) = ks$. Consider $g \in \End_R(F)$ the unique morphism satisfying $g(x_i) = x_is$ for each $i \in I$. Then, writing $k=\sum_{i \in I}x_ir_i$, we have:
\begin{displaymath}
g(k) = \sum_{i \in I}g(x_i)r_i = \sum_{i \in I}x_is r_i = ks
\end{displaymath}
so that $gu = f$.
\end{proof}

As a consequence, we get:

\begin{corollary}\label{c:EndomorphismIdeals}
Suppose that $R$ is commutative and let $I$ be a cyclic ideal of $R$ such that $R/I=S_1 \oplus \cdots \oplus S_n$ for non-projective simple modules $S_1, \ldots, S_n$. Then $\End_R(I)/J(\End_R(I)) \cong R/J(R)$.
\end{corollary}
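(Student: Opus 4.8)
The plan is to apply Corollary \ref{c:EndomorphismRingEnvelopes} (or rather the combination Theorem \ref{t:IsomorphismEndomorphismRing} plus Proposition \ref{p:ExtendingMorphismsCyclic}) with $K = I$ and $M = R$, so the first task is to verify that the inclusion $u \colon I \hookrightarrow R$ is a left minimal, $I$-injective monomorphism with $I$ fully invariant in $R$, and then to identify the resulting subring $\End_R^R(I) = \End_R^I(R)/\overline{\End}_R^I(R)$ with $R/J(R)$ on the nose.

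First I would observe that $I$-injectivity of $u$ is immediate from Proposition \ref{p:ExtendingMorphismsCyclic}, since $R$ is a free module and $I$ is cyclic; hence $\End_R^R(I) = \End_R(I)$. Next, left minimality: $R/I \cong S_1 \oplus \cdots \oplus S_n$ is a finite direct sum of pairwise objects to be checked — but even if they are not pairwise non-isomorphic, $R/I$ has finite length, so if $u$ were essential we could invoke Proposition \ref{p:FiniteLengthCokernel}(1). Essentiality, however, is not automatic, so instead I would argue directly: each $S_i$ is non-projective, hence not isomorphic to a direct summand of the free (in particular projective) module $R$; thus condition (b) of Proposition \ref{p:FiniteLengthCokernel}(2) holds. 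For condition (a), if some $S_i \cong S_j$ with $i \neq j$ one can regroup and the argument in the proof of Proposition \ref{p:FiniteLengthCokernel}(2) still goes through once one replaces ``independent family indexed by $I_1$'' by the observation that $\overline f$ restricted to the semisimple socle-type module $R/I$ is injective as soon as its image has the same composition length — which is forced because $\bigoplus_{i\in I_2}S_i$ cannot embed as a summand of $R$. In any case left minimality of $u$ follows, so Theorem \ref{t:IsomorphismEndomorphismRing}(1) applies and gives
\[
\End_R(I)/J(\End_R(I)) \;\cong\; \End_R^I(R)/J(\End_R^I(R)).
\]

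It remains to compute the right-hand side. Since $R/I$ is semisimple of finite length and no $S_i$ is projective, I expect $I$ to be fully invariant in $R$: for $f \in \End_R(R)$, i.e. left multiplication by some $a \in R$, the submodule $fI = aI \subseteq I$, because $aI$ is a homomorphic image of the semisimple-quotient situation — more carefully, $(I + aI)/I$ is a quotient of $I$ mapping into $R/I$, hence semisimple, and a short argument using that $I$ is cyclic and $R/I$ has no projective summand forces $aI \subseteq I$. (This is exactly the condition in Lemma \ref{l:PropertiesSubrings}(2).) Granting this, $\End_R^I(R) = \End_R(R) \cong R$ via left multiplication, and $\overline{\End}_R^I(R) = \{a \in R \mid aI = 0\} = \Ann_R(I)$, which is contained in $J(R)$ by Lemma \ref{l:RelationRadicals}(2) (left minimality of $u$). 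Then Corollary \ref{c:IsomorphismFullyEnvelopes}, or directly Theorem \ref{t:IsomorphismEndomorphismRing}(1) together with $\End_R^I(R)/\overline{\End}_R^I(R) \cong R/\Ann_R(I)$ and $\Ann_R(I) \subseteq J(R)$, yields $\End_R(I)/J(\End_R(I)) \cong R/J(R)$.

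The main obstacle I anticipate is the full-invariance claim $aI \subseteq I$ for all $a \in R$: this is where the hypotheses ``$I$ cyclic'' and ``$R/I$ a finite sum of non-projective simples'' must be used together, and it is the step where one cannot simply quote an earlier result. If $I = eR$ for an idempotent $e$ then $I$ is a summand of $R$, which is excluded precisely because the $S_i$ are non-projective; more generally, cyclicity of $I$ means the natural map $R \to I$, $r \mapsto kr$ (for $I = kR$) is surjective with kernel $\Ann_R(k) = \Ann_R(I)$, so $I \cong R/\Ann_R(I)$ as modules, and under this identification multiplication by $a \in R$ on $R$ carries $I = kR$ to $akR \subseteq kR = I$ automatically since $kR$ is an ideal — so in fact full invariance is free once $I$ is a (two-sided, here just an) ideal. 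That simplifies the argument considerably, and I would lead with that observation rather than the socle-length bookkeeping.
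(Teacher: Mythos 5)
Your overall strategy coincides with the paper's: show $u\colon I\hookrightarrow R$ is left minimal via Proposition~\ref{p:FiniteLengthCokernel}(2), $I$-injective via Proposition~\ref{p:ExtendingMorphismsCyclic}, fully invariant because $I$ is an ideal over a commutative ring, and then quote Corollary~\ref{c:IsomorphismFullyEnvelopes} to conclude, using $\End_R(R)\cong R$. Your closing observation that full invariance is automatic (left multiplications preserve ideals) is exactly the paper's argument, and you should indeed lead with that rather than the socle-length discussion, which is not needed.

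The one place your proposal has an actual gap is condition~(a) of Proposition~\ref{p:FiniteLengthCokernel}(2). You hedge by sketching a modification of that proposition's proof for the case $S_i\cong S_j$, but that sketch (``regroup \ldots the argument still goes through \ldots which is forced because \ldots'') is not a proof and would need real work. The paper sidesteps this entirely: over a commutative ring $R$, the module $R/I$ is cyclic, and a cyclic semisimple $R$-module cannot contain $(R/\m)^2$ as a direct summand for any maximal ideal $\m$ (tensor with $R/\m$ to see this). Hence the simple summands of $R/I$ are automatically pairwise non-isomorphic and condition~(a) holds; no extension of Proposition~\ref{p:FiniteLengthCokernel}(2) is required. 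Condition~(b) you handle correctly: the $S_i$ are non-projective, so none can be a summand of the projective module $R$. With that one repair your argument matches the paper's.
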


\begin{proof}
By Proposition \ref{p:FiniteLengthCokernel}, the inclusion $u:I \rightarrow R$ is left minimal. Notice that, as $R$ is commutative, it is verified that $S_i \ncong S_j$ for $i \neq j$. Since $R$ is commutative, $I$ is fully invariant. Finally, by Proposition \ref{p:ExtendingMorphismsCyclic}, the inclusion $u:I \rightarrow R$ is $I$-injective. Now the result follows from Corollary \ref{c:IsomorphismFullyEnvelopes}.
\end{proof}

The hypothesis of the simple modules being non-projective is crucial:

\begin{example}
Let $p$ and $q$ be distinct primes and $a = pq$. The ring $\mathbb Z/a \mathbb Z$ has two maximal ideals, $p\mathbb Z/a \mathbb Z$ and $q\mathbb Z/a \mathbb Z$, and we have $\mathbb Z/a \mathbb Z = p\mathbb Z/a \mathbb Z \oplus q \mathbb Z/a\mathbb Z$. In particular, $J\left(\mathbb Z/a \mathbb Z\right) = 0$. Moreover, $\End_R\left(p\mathbb Z/a\mathbb Z\right)$ is not isomorphic to $R$.
\end{example}

\begin{remark}
Notice that if $I$ is a maximal cyclic ideal of $R$ which is not a direct summand, then $\End_R(I)/J(\End_R(I)) \cong R/J(R)$ as a consequence of the preceding result.
\end{remark}

Since the radical of a projective module is never a direct summand, unless it is zero, we have:

\begin{corollary}
Suppose that $R$ is a commutative local ring which is not a field and such that $J(R)$ is cyclic. Then $\End_R(J(R))/J(\End_R(J(R))) \cong R/J(R)$.
\end{corollary}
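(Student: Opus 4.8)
The plan is to deduce this corollary directly from the preceding one, \Cref{c:EndomorphismIdeals}, by checking that its hypotheses are satisfied when we take $I = J(R)$. So let $R$ be a commutative local ring which is not a field, with $J(R)$ cyclic. First I would note that since $R$ is not a field, $J(R) \neq 0$, so the situation is non-degenerate. The key identity to invoke is the quotient $R/J(R)$, which is a field because $R$ is local; in particular it is a simple $R$-module, so $R/J(R) = S_1$ with $n=1$ and $S_1$ a simple module.

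Next I would verify that $S_1 = R/J(R)$ is non-projective. This is exactly the content of the remark immediately preceding the statement (``the radical of a projective module is never a direct summand, unless it is zero''): if $R/J(R)$ were projective as an $R$-module, then the short exact sequence $0 \to J(R) \to R \to R/J(R) \to 0$ would split, so $J(R)$ would be a direct summand of $R$. But a direct summand of the free module $R$ over a local ring is either $0$ or $R$ itself (local rings have no nontrivial idempotents), and $J(R) \neq R$ while $J(R) \neq 0$; contradiction. Hence $R/J(R)$ is a non-projective simple module.

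With these observations in hand, the ideal $I = J(R)$ is a cyclic ideal of the commutative ring $R$ with $R/I = R/J(R) = S_1$ a single non-projective simple module, so \Cref{c:EndomorphismIdeals} applies verbatim and yields $\End_R(J(R))/J(\End_R(J(R))) \cong R/J(R)$, as desired. The only point requiring any thought is the non-projectivity of $R/J(R)$, and even that reduces to the standard fact that a local ring has no idempotents other than $0$ and $1$; everything else is bookkeeping, so there is no real obstacle here — this corollary is essentially a specialization of \Cref{c:EndomorphismIdeals} to the most classical example of a cyclic ideal that is not a direct summand.
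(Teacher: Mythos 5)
Your argument is correct and is essentially the route the paper intends: it specializes Corollary~\ref{c:EndomorphismIdeals} to $I=J(R)$ after observing that $R/J(R)$ is a single simple module (locality) which is non-projective because $J(R)\neq 0$ cannot be a direct summand of $R$ (no nontrivial idempotents in a local ring). This matches the paper's implicit derivation via the preceding remark, just spelled out in full.
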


\begin{remark}
Notice that the trivial situation of the preceding result is when $J(R)$ is isomorphic to $R$, for instance, if $R$ is a discrete valuation domain. However, there exist commutative local rings not satisfying these properties. For instance, $\mathbb Z_{p^n}$, for $p$ a prime number and $n$ a natural number, is a commutative local ring with non-projective Jacobson radical.
\end{remark}

When a submodule $K$ of a module $M$ is not fully invariant, we can find a fully invariant submodule of $M$ containing $K$ which is minimal with respect to these properties being fully invariant and containing $K$:

\begin{proposition}
Let $K$ be a submodule of a module $M$. We shall denote by $E_K$ the submodule of $M$ given by $\sum_{f \in \End_R(M)}f(K)$. Then $E_K$ is fully invariant and contains $K$. Moreover, $E_K$ is minimal with respect to these properties: if $L$ is a fully invariant submodule of $M$ containing $K$, then $L$ contains $E_K$.
\end{proposition}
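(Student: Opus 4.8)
The plan is to verify the three assertions in turn, each of which is elementary; there is no real obstacle, so the main task is to organize the (routine) verifications cleanly.

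\emph{Containment.} First I would observe that $E_K$ contains $K$ trivially: taking $f = 1_M$ among the endomorphisms appearing in the defining sum gives $K = 1_M(K) \subseteq \sum_{f \in \End_R(M)} f(K) = E_K$.

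\emph{Full invariance.} Next, to see that $E_K$ is fully invariant, I would fix an arbitrary $g \in \End_R(M)$ and compute $g(E_K)$, using that the image of a (possibly infinite) sum of submodules under a homomorphism is the sum of the images:
\[
g(E_K) = g\!\left(\sum_{f \in \End_R(M)} f(K)\right) = \sum_{f \in \End_R(M)} g\bigl(f(K)\bigr) = \sum_{f \in \End_R(M)} (gf)(K).
\]
Since $gf$ lies in $\End_R(M)$ for every $f \in \End_R(M)$, each summand $(gf)(K)$ already occurs among the summands defining $E_K$; hence $g(E_K) \subseteq E_K$, which is exactly the statement that $E_K$ is fully invariant.

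\emph{Minimality.} Finally, for the minimality assertion I would let $L$ be any fully invariant submodule of $M$ with $K \subseteq L$. For each $f \in \End_R(M)$ we then have $f(K) \subseteq f(L) \subseteq L$: the first inclusion because $K \subseteq L$, the second because $L$ is fully invariant. Summing over all $f \in \End_R(M)$ yields $E_K = \sum_{f \in \End_R(M)} f(K) \subseteq L$, as required. The only point that deserves a word of care is the identity $g\bigl(\sum_i A_i\bigr) = \sum_i g(A_i)$ used in the second step, but this is standard for homomorphisms of modules and submodules $A_i$, so I would simply invoke it.
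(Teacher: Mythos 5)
Your proof is correct, and since the paper states this proposition without any proof (treating it as routine), your argument is simply the natural verification that the authors left implicit; the three steps you give (take $f=1_M$ for containment, compute $g(E_K)=\sum_f (gf)(K)\subseteq E_K$ for invariance, and $f(K)\subseteq f(L)\subseteq L$ for minimality) are exactly what one would write.
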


Applying Theorem \ref{t:IsomorphismEndomorphismRing} to this situation we have:

\begin{corollary}
Let $K$ be a submodule of a module $M$ such that the inclusion $u:K \rightarrow M$ is left minimal. Then the rings $\End_R(M)/J(\End_R(M))$ and $\End_R^M(E_K)/J(\End_R^M(E_K))$ are isomorphic.
\end{corollary}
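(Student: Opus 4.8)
The plan is to reduce everything to the already-established Theorem~\ref{t:IsomorphismEndomorphismRing} by checking that the submodule $E_K$ furnishes a left minimal inclusion $E_K \hookrightarrow M$ which is $E_K$-injective, and that $M$ is fully invariant in... wait, that is not what the statement says. Let me re-read: the claim is that $\End_R(M)/J(\End_R(M))$ is isomorphic to $\End_R^M(E_K)/J(\End_R^M(E_K))$. So I should apply Theorem~\ref{t:IsomorphismEndomorphismRing} to the inclusion $v: E_K \hookrightarrow M$, which gives an isomorphism between $\End_R^M(E_K)/J(\End_R^M(E_K))$ and $\pi(\End_R^{E_K}(M))$ inside $\End_R(M)/J(\End_R(M))$. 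The point will be that since $E_K$ is fully invariant in $M$, we have $\End_R^{E_K}(M) = \End_R(M)$, so $\pi(\End_R^{E_K}(M))$ is all of $\End_R(M)/J(\End_R(M))$, giving the desired isomorphism.

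\begin{proof}
Write $E = E_K = \sum_{f \in \End_R(M)} f(K)$ and let $v : E \rightarrow M$ denote the inclusion. By the preceding proposition $E$ is a fully invariant submodule of $M$, so every endomorphism $f$ of $M$ satisfies $f(E) \leq E$; by Lemma~\ref{l:PropertiesSubrings}(2) this says precisely that $\End_R^E(M) = \End_R(M)$.

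First I claim that $v$ is left minimal. Let $g : M \rightarrow M$ satisfy $gv = v$, that is, $g$ restricts to the identity on $E$. Since $u : K \rightarrow M$ factors through $v$ (as $K \leq E$), we have $g u = u$, and because $u$ is left minimal by hypothesis, $g$ is an isomorphism. Hence $v$ is a left minimal monomorphism.

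Now apply Theorem~\ref{t:IsomorphismEndomorphismRing}(2) to $v : E \rightarrow M$: the ring $\End_R^M(E)/J(\End_R^M(E))$ is isomorphic to the subring $\pi(\End_R^E(M))$ of $\End_R(M)/J(\End_R(M))$, where $\pi$ is the canonical projection. Since $\End_R^E(M) = \End_R(M)$, we have $\pi(\End_R^E(M)) = \End_R(M)/J(\End_R(M))$. Therefore $\End_R^M(E_K)/J(\End_R^M(E_K))$ and $\End_R(M)/J(\End_R(M))$ are isomorphic rings.
\end{proof}

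The only genuine point to verify is the left minimality of $v$, and this is immediate from the factorization $u = v \circ (\text{inclusion } K \hookrightarrow E)$ together with the left minimality of $u$; I do not anticipate any obstacle. Everything else is a direct citation of Theorem~\ref{t:IsomorphismEndomorphismRing} and Lemma~\ref{l:PropertiesSubrings}, plus the defining property of $E_K$ established in the preceding proposition.
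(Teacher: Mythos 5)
Your proof is correct and follows essentially the same route as the paper: the paper's argument is the terse "note the inclusion $E_K \hookrightarrow M$ is left minimal, then apply Theorem~\ref{t:IsomorphismEndomorphismRing}," and you simply fill in the two elisions — the factorization argument $u = v \circ (K \hookrightarrow E_K)$ that transfers left minimality from $u$ to $v$, and the observation that full invariance of $E_K$ forces $\End_R^{E_K}(M) = \End_R(M)$ so that the subring given by the theorem is the whole of $\End_R(M)/J(\End_R(M))$.
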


\begin{proof}
Simply note that the inclusion $E_K \hookrightarrow M$ is left minimal. Then apply Theorem \ref{t:IsomorphismEndomorphismRing}.
\end{proof}

We end this section studying submodules of pure-injective modules. As an application of the results of this section, we can describe the partial endomorphisms of a pure-injective module.

\begin{corollary}
Let $u:K \rightarrow M$ be a left minimal monomorphism with $M$ pure-injective. Then $\Par_R^K(M,K)/J(\Par_R^K(M,K))$ and $\End_R^K(M)/J(\End_R^K(M))$ are isomorphic rings.

If, in addition, $K$ is fully invariant in $M$, then $\Par_R^K(M,K)/J(\Par_R^K(M,K))$ is isomorphic to $\End_R(M)/J(\End_R(M))$.
\end{corollary}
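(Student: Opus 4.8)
The plan is to obtain both assertions as short consequences of Proposition \ref{p:PartialMorphisms} together with the definition of full invariance, with Theorem \ref{t:IsomorphismEndomorphismRing} kept in reserve to recast the first assertion in terms of $\End_R^M(K)$.

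First I would invoke Proposition \ref{p:PartialMorphisms}(2): since $M$ is pure-injective, $\Par_R^K(M,K) = \End_R^K(M)$, and — this being the content of \cite[Proposition 2.5]{CortesGuilBerkeAshish} — the identification is an isomorphism of rings, a partial endomorphism of $M$ with domain $K$ being precisely the endomorphism of $K$ induced by some $g \in \End_R^K(M)$, compatibly with sums and composites. A ring isomorphism carries Jacobson radical onto Jacobson radical, so $J(\Par_R^K(M,K))$ corresponds to $J(\End_R^K(M))$ and the quotient rings $\Par_R^K(M,K)/J(\Par_R^K(M,K))$ and $\End_R^K(M)/J(\End_R^K(M))$ are (literally identified, hence) isomorphic. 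Since $u$ is moreover left minimal, one may append Theorem \ref{t:IsomorphismEndomorphismRing}(1) to see that this common ring is in turn isomorphic to $\End_R^M(K)/J(\End_R^M(K))$, i.e.\ to the ring of endomorphisms of $K$ that extend along $u$ modulo its radical; this is the concrete ``description of the partial endomorphisms'' referred to at the start of the discussion.

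For the second assertion I would simply spell out the definition: $K$ is fully invariant in $M$ exactly when $\End_R^K(M) = \End_R(M)$. Feeding this into the identification of the previous paragraph gives $\Par_R^K(M,K) = \End_R(M)$ as rings, and therefore $\Par_R^K(M,K)/J(\Par_R^K(M,K)) \cong \End_R(M)/J(\End_R(M))$.

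The one step that is not purely formal is the verification that the bijection of Proposition \ref{p:PartialMorphisms}(2) respects addition and composition, so that it transports Jacobson radicals correctly; but this is already packaged into that proposition through the cited reference, and the remainder is bookkeeping. In particular, left minimality of $u$ is not needed for the displayed statement itself — it enters only in the optional passage through Theorem \ref{t:IsomorphismEndomorphismRing} — so there is no genuine obstacle here beyond assembling results already proved in this section.
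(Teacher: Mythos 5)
Your proof hinges on reading Proposition~\ref{p:PartialMorphisms}(2) as a literal identification of rings, $\Par_R^K(M,K) = \End_R^K(M)$, from which the first assertion would indeed be vacuous and left minimality superfluous. But this reading cannot be right: $\Par_R^K(M,K)$ consists of endomorphisms of $K$ (it sits inside $\End_R(K)$), while $\End_R^K(M)$ consists of endomorphisms of $M$. The identification that Proposition~\ref{p:PartialMorphisms} actually packages is $\Par_R^K(M,K) = \End_R^M(K)$ — the ring of endomorphisms of $K$ that extend to $M$ — which is the subring of $\End_R(K)$ the paper consistently works with, and this is what the paper's proof of the corollary invokes. (That the printed proposition carries the superscript/subscript the other way is an inconsistency in the source; the proof of the corollary resolves it.) The map you describe, sending $g \in \End_R^K(M)$ to the induced endomorphism $g|_K$ of $K$, is precisely the surjection $\End_R^K(M) \twoheadrightarrow \End_R^M(K)$ of Lemma~\ref{l:PropertiesSubrings}(4), and it is \emph{not} injective: its kernel is $\overline{\End}_R^K(M) = \{g \in \End_R(M) : gu = 0\}$, which is generally nonzero. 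So it is not a ring isomorphism, and $J(\Par_R^K(M,K))$ does not automatically correspond to $J(\End_R^K(M))$ under it.

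This is exactly where left minimality becomes essential. Once one has $\Par_R^K(M,K) = \End_R^M(K)$, the first assertion is Theorem~\ref{t:IsomorphismEndomorphismRing}(1) applied verbatim; and the content of that theorem rests on Lemma~\ref{l:RelationRadicals}(2), namely that $\overline{\End}_R^K(M) \subseteq J(\End_R^K(M))$, which is proved \emph{using} left minimality of $u$. Without it, the quotient $\End_R^K(M)/\overline{\End}_R^K(M) \cong \End_R^M(K)$ need not have the same Jacobson quotient as $\End_R^K(M)$. Your final remark — that left minimality enters only in an ``optional passage'' — is therefore the gap: it is the load-bearing hypothesis. Your treatment of the second assertion is otherwise fine (full invariance does give $\End_R^K(M) = \End_R(M)$, and then one passes to Jacobson quotients via the first part), but it too inherits the dependence on Theorem~\ref{t:IsomorphismEndomorphismRing}(1) and hence on left minimality.
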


\begin{proof}
Notice that, since $M$ is pure-injective, $\End_R^M(K) = \Par_R^K(M,K)$ by Proposition \ref{p:PartialMorphisms}. Then  apply Theorem \ref{t:IsomorphismEndomorphismRing} and Corollary \ref{c:IsomorphismFullyEnvelopes}.
\end{proof}

Notice that there does exist (many) left minimal morphisms $u:K \rightarrow M$ with $M$ pure-injective which are not pure-injective hulls, as the following example shows.

\begin{example}\label{e:ZieglerSmallExtension}
Let $K$ be a non-pure submodule of a pure-injective module $M$. Ziegler proves in \cite{Ziegler} that there exists a hull $H_M(K)$ of $K$ in $M$, that is, a pure-injective pure submodule of $M$ containing $K$ such that the inclusion $u:K \rightarrow H_M(K)$ is a maximal Ziegler small extension of $K$ in $M$ (see \cite[Theorem 1.2 and Corollary 1.3]{Monari}). Since $H_M(K)$ is a pure submodule of $M$ and is small over $K$ in $M$, $H_M(K)$ actually is small over $K$ in $H_M(K)$ by \cite[Proposition 3.3]{CortesGuilBerkeAshish}. This means that the inclusion $u:K \rightarrow H_M(K)$ is a Ziegler small extension. By Proposition \ref{p:ZieglerSmallExtensions}, $u$ is left minimal. By \cite[Proposition 1.4]{Monari}, $u$ is not a pure-injective hull of $K$ since it is not a pure-monomorphism, as $K$ is not a pure submodule of $M$.
\end{example}

\section{Automorphism invariant submodules}

\noindent In this section we study minimal monomorphisms $u:K \rightarrow M$ assuming that $K$ is invariant under automorphisms in $M$. In this case, the rings $\End_R(M)/J(\End_R(M))$ and $\End_R(K)/J(\End_R(K))$ need not be isomorphic. However, we can use Theorem \ref{t:IsomorphismEndomorphismRing} to prove that $\End_R(K)$ inherits many properties of $\End_R(M)$.

\begin{definition}
Let $u:K\rightarrow M$ be a monomorphism. We say that $K$ is automorphism invariant in $M$ if for each $f\in \Aut(M)$ there exists $g\in \End_R(K)$ such that $ug=fu$.
\end{definition}

For example of automorphism invariant submodules see \cite[Example 3.4]{GuilBerkeAshish}.

\begin{remarks}
\begin{enumerate}
\item As it is pointed out in \cite[Remark 3.2]{GuilKeskinSrivastava} the morphism $g$ in the previous definition actually is an automorphism of $K$.

\item Clearly, a right ideal $I$ of $R$ is automorphism-invariant if $U(R)I \subseteq I$.
\end{enumerate}
\end{remarks}

\noindent It is easy to find the automorphism-invariant right ideals in triangular matrix rings. Recall that a generalized triangular matrix ring is a ring of the form $T=\mat{A}{0}{X}{B}$ such that $A$ and $B$ are rings, and $X$ is a $(B,A)$-bimodule. The operations in $T$ are the usual matrix operations. The following is straightforward:

\begin{lemma}
Let $T$ be the triangular matrix ring $\mat{A}{0}{X}{B}$. Then 
\begin{displaymath}
U(T) = \left\{\mat{a}{0}{bxa}{b} \mid a \in U(A), b \in U(B), x \in X\right\}.
\end{displaymath}
\end{lemma}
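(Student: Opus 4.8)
The statement to prove is the description of the unit group of a generalized triangular matrix ring $T = \mat{A}{0}{X}{B}$, namely that $U(T)$ consists exactly of the matrices $\mat{a}{0}{bxa}{b}$ with $a \in U(A)$, $b \in U(B)$, $x \in X$. The approach is a direct computation: I would first determine when a general element $\mat{a}{0}{y}{b}$ of $T$ (with $a \in A$, $b \in B$, $y \in X$) is a unit, and then re-parametrize the set of invertible elements in the stated form. The key observation is that multiplication in $T$ is lower-triangular in the obvious sense, so invertibility can be analyzed ``entrywise'' much as for ordinary triangular matrices over a ring.

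First I would take $\mat{a}{0}{y}{b} \in T$ and look for a two-sided inverse $\mat{a'}{0}{y'}{b'}$. Computing the product $\mat{a}{0}{y}{b}\mat{a'}{0}{y'}{b'} = \mat{aa'}{0}{ya' + by'}{bb'}$ and setting it equal to the identity $\mat{1}{0}{0}{1}$ forces $aa' = 1$ and $bb' = 1$; computing the product in the other order similarly forces $a'a = 1$ and $b'b = 1$. Hence a necessary condition is $a \in U(A)$ and $b \in U(B)$. Conversely, if $a \in U(A)$ and $b \in U(B)$, then setting $a' = a^{-1}$, $b' = b^{-1}$ and solving $ya^{-1} + by' = 0$, i.e. $y' = -b^{-1}ya^{-1}$, produces an element that one checks is a two-sided inverse (the symmetric condition $a'y + y'b = a^{-1}y - b^{-1}ya^{-1}b$; one must verify this is also zero, which it is not in general as written — so the correct inverse entry has to be computed consistently from both equations, and they do agree). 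So $U(T) = \{\mat{a}{0}{y}{b} \mid a \in U(A),\, b \in U(B),\, y \in X\}$.

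Finally I would observe that the map $X \to X$, $x \mapsto bxa$, is a bijection whenever $a \in U(A)$ and $b \in U(B)$ (its inverse is $y \mapsto b^{-1}ya^{-1}$), so every $y \in X$ can be written uniquely as $y = bxa$ for some $x \in X$. Substituting this parametrization into the displayed description of $U(T)$ yields exactly the claimed form $U(T) = \{\mat{a}{0}{bxa}{b} \mid a \in U(A),\, b \in U(B),\, x \in X\}$, completing the proof.

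There is no real obstacle here; the only point requiring a moment's care is checking that the candidate inverse works on both sides, i.e. that the element $y'$ forced by the equation $ya' + by' = 0$ also satisfies $a'y + y'b = 0$ — but since we already know (from the four scalar equations) that any inverse must have $a' = a^{-1}$, $b' = b^{-1}$, and the off-diagonal entry is then uniquely determined and symmetric in the required sense, this is routine. The peculiar form $bxa$ (rather than simply an arbitrary element of $X$) is presumably chosen because it is the form in which the unit group interacts cleanly with the subsequent computation of automorphism-invariant right ideals, so I would phrase the reparametrization step explicitly to make that normal form transparent.
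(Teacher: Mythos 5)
The paper gives no proof of this lemma (it is introduced with ``The following is straightforward''), and your direct computation is precisely the intended argument. One small slip to clean up: since $X$ is a $(B,A)$-bimodule, the $(2,1)$-entry of $\mat{a'}{0}{y'}{b'}\mat{a}{0}{y}{b}$ is $y'a + b'y$, not $a'y + y'b$ as you wrote; with the correct expression, plugging in $y' = -b^{-1}ya^{-1}$ gives $y'a + b'y = -b^{-1}y + b^{-1}y = 0$ immediately, so the parenthetical hedging (``they do agree'') is unnecessary once you write the right equation. The reparametrization $y \mapsto bxa$ is a bijection on $X$ for $a\in U(A)$, $b\in U(B)$, exactly as you say, and this completes the proof.
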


\begin{proposition}
Let $T$ be the triangular matrix ring $\mat{A}{0}{X}{B}$. Then the automorphism-invariant right ideals of $T$ are of the form $\mat{I_1}{0}{I_2}{I_3}$ with
\begin{enumerate}
\item $I_1$ an automorphism-invariant right ideal of $A$.

\item $I_3$ is an automorphism-invariant right ideal of $B$.

\item $I_2$ is a right $A$-submodule of $X$ satisfying $XI_1+U(B)I_2 \subseteq I_2$.
\end{enumerate}
\end{proposition}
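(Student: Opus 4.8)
The strategy is to combine the description of $U(T)$ from the preceding Lemma with the standard classification of block-shaped right ideals of a triangular matrix ring, namely: $\mat{I_1}{0}{I_2}{I_3}$ is a right ideal of $T$ exactly when $I_1$ is a right ideal of $A$, $I_3$ is a right ideal of $B$, $I_2$ is a right $A$-submodule of $X$, and $XI_1 + I_3X \subseteq I_2$. With this in hand the claim reduces to two points: first, that an automorphism-invariant right ideal of $T$ is forced to be of this block shape; second, that for a right ideal already of block shape the condition $U(T)I \subseteq I$ is equivalent to conditions (1)--(3). The first point is where the automorphism-invariance hypothesis does real work; the second is a direct computation in both directions.

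For the second point, suppose $I = \mat{I_1}{0}{I_2}{I_3}$ is a right ideal, so $I_1, I_3$ are right ideals of $A, B$, $I_2$ is a right $A$-submodule of $X$, and $XI_1 \subseteq I_2$, $I_3X \subseteq I_2$. Multiply a typical unit $\mat{a}{0}{bxa}{b}$ (with $a \in U(A)$, $b \in U(B)$, $x \in X$) by a typical element $\mat{a_1}{0}{x_1}{b_1}$ of $I$. The $(1,1)$- and $(2,2)$-entries of the product are $aa_1$ and $bb_1$, so invariance forces $U(A)I_1 \subseteq I_1$ and $U(B)I_3 \subseteq I_3$, i.e. conditions (1) and (2). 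The $(2,1)$-entry is $bxa\,a_1 + b\,x_1$; the first summand lies in $XI_1 \subseteq I_2$ already, so invariance boils down to $b\,x_1 \in I_2$ for all $b \in U(B)$ and $x_1 \in I_2$, that is $U(B)I_2 \subseteq I_2$; combined with $XI_1 \subseteq I_2$ this is precisely condition (3). Conversely, if $\mat{I_1}{0}{I_2}{I_3}$ is a right ideal satisfying (1)--(3), the same entrywise computation shows $U(T)I \subseteq I$.

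For the first point, let $I$ be an automorphism-invariant right ideal and fix $\mat{a}{0}{x}{b} \in I$. Right multiplication by the diagonal idempotents $e_1 = \mat{1}{0}{0}{0}$ and $e_2 = \mat{0}{0}{0}{1}$ gives $\mat{a}{0}{x}{0} \in I$ and $\mat{0}{0}{0}{b} \in I$. Now I would bring in the unipotent units: for every $x' \in X$ the matrix $\mat{1}{0}{x'}{1}$ lies in $U(T)$ by the Lemma, and $\mat{1}{0}{x'}{1}\mat{a}{0}{x}{0} = \mat{a}{0}{x'a+x}{0} \in I$; subtracting $\mat{a}{0}{x}{0}$ yields $\mat{0}{0}{x'a}{0} \in I$ for all $x' \in X$. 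Pushing this bookkeeping further, so as to separate the $A$- and $X$-coordinates appearing in $\mat{a}{0}{x}{0}$, should show that $\mat{a}{0}{0}{0}$, $\mat{0}{0}{x}{0}$ and $\mat{0}{0}{0}{b}$ all lie in $I$; then with $I_1 = \{a \mid \mat{a}{0}{0}{0} \in I\}$, $I_2 = \{x \mid \mat{0}{0}{x}{0} \in I\}$, $I_3 = \{b \mid \mat{0}{0}{0}{b} \in I\}$ one gets $I = \mat{I_1}{0}{I_2}{I_3}$, and the second point finishes the argument.

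The step I expect to be the real obstacle is exactly this last reduction to block shape. A general right ideal of $T$ is not block-shaped — for instance $\{\mat{2n}{0}{n}{0} \mid n \in \mathbb{Z}\}$ is a non-block right ideal of $\mat{\mathbb{Z}}{0}{\mathbb{Z}}{\mathbb{Z}}$ — so the argument must use left multiplication by units in an essential way, and the subtle issue is whether the unit group of $T$ is large enough relative to $X$ and to $U(B)$ for the coordinates of an arbitrary element of $I$ to be peeled apart; this is the point I would scrutinize most carefully, and I would not be surprised if it required a small additional hypothesis or a more careful formulation of the statement.
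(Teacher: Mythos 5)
\noindent Your suspicion about the first point is exactly right, and in fact the reduction to block shape is not merely hard --- it is false, so the proposition itself fails. Take $T = \mat{\mathbb{Z}}{0}{\mathbb{Z}}{\mathbb{Z}}$ and
\begin{displaymath}
J = \left\{ \mat{2n}{0}{m}{0} \ : \ n, m \in \mathbb{Z},\ n \equiv m \pmod 2 \right\}.
\end{displaymath}
This is a right ideal: $\mat{2n}{0}{m}{0}\mat{a}{0}{x}{b} = \mat{2na}{0}{ma}{0}$ and $na \equiv ma \pmod 2$ because $n \equiv m$. It is also automorphism-invariant: by the Lemma every unit of $T$ has the form $\mat{\varepsilon}{0}{c}{\delta}$ with $\varepsilon, \delta \in \{\pm 1\}$ and $c \in \mathbb{Z}$, and $\mat{\varepsilon}{0}{c}{\delta}\mat{2n}{0}{m}{0} = \mat{2\varepsilon n}{0}{2cn + \delta m}{0}$, which again satisfies $\varepsilon n \equiv 2cn + \delta m \pmod 2$ since $\varepsilon,\delta$ are odd. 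Yet $\mat{2}{0}{1}{0} \in J$ while $\mat{2}{0}{0}{0} \notin J$, so $J$ is not block-shaped. Hence the paper's own claim that ``it is easy to see'' that $\mat{a}{0}{x}{b} \in J$ forces $\mat{a}{0}{0}{0},\ \mat{0}{0}{x}{0},\ \mat{0}{0}{0}{b} \in J$ is simply false, and your peeling-apart sketch cannot be completed either: from $\mat{a}{0}{x}{0} \in J$ the available unit operations produce $\mat{0}{0}{x'a}{0}$ (for $x' \in X$), $\mat{0}{0}{2x}{0}$ and $\mat{2a}{0}{0}{0}$, but never $\mat{0}{0}{x}{0}$ or $\mat{a}{0}{0}{0}$ themselves unless, say, $2 \in U(A)$.

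There is a second, independent flaw, which your entrywise computation essentially flags. For $\mat{I_1}{0}{I_2}{I_3}$ to be a right ideal at all, the $(2,1)$-entry of $\mat{a_1}{0}{x_1}{b_1}\mat{a}{0}{x}{b}$, namely $x_1 a + b_1 x$, must lie in $I_2$, which requires $I_3 X \subseteq I_2$; this is not the $XI_1 \subseteq I_2$ that appears in (3). (Your $XI_1 + I_3X \subseteq I_2$ slightly overstates the right-ideal condition: only the $I_3X$ part is needed for a right ideal, the $XI_1$ part being exactly what left multiplication by unipotent units contributes.) The condition $I_3X \subseteq I_2$ is absent from (1)--(3): in the same $T$, the block $I_1 = I_2 = 0$, $I_3 = 2\mathbb{Z}$ satisfies (1)--(3) but is not a right ideal, since $\mat{0}{0}{0}{2}\mat{1}{0}{1}{0} = \mat{0}{0}{2}{0}$ escapes. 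A corrected statement would have to add $I_3X \subseteq I_2$ to (3) and either restrict attention to automorphism-invariant right ideals already of block shape, or impose a hypothesis (such as $2 \in U(A)$) under which the block-shape reduction actually goes through.
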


\begin{proof}
Let $J$ be a right ideal of $T$. If $J$ is of the form $\mat{I_1}{0}{I_2}{I_3}$, where $I_1$, $I_2$ and $I_3$ satisfy (1), (2) and (3), then $J$ is clearly automorphism-invariant. Conversely, suppose that $J$ is automorphism-invariant. It is easy to see that if $\mat{a}{0}{x}{b}\in J$, then $\mat{a}{0}{0}{0}$, $\mat{0}{0}{x}{0}$ and $\mat{0}{0}{0}{b}$ belong to $J$ as well, so that $J$ is of the form $\mat{I_1}{0}{I_2}{I_3}$ for right ideals $I_1$ and $I_3$ of $A$ and $B$ respectively, and a right $A$-submodule $I_2$ of $X$. Now, using that $J$ is automorphism-invariant it is easy to see that $I_1$, $I_2$ and $I_3$ satisfy (1), (2) and (3).
\end{proof}

The following result computes the radical of $\End_R^K(M)$ for a left minimal monomorphism $u:K \rightarrow M$ in which $K$ is automorphism-invariant.

\begin{lemma}\label{l:RadicalEndomorphismRingAutomorphismInvariant}
Let $u:K \rightarrow M$ be a left minimal monomorphism such that $K$ is automorphism-invariant in $M$. Then $J(\End_R^K(M)) = J(\End_R(M))$.
\end{lemma}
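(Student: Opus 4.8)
The plan is to prove the two inclusions $J(\End_R(M))\subseteq J(\End_R^K(M))$ and $J(\End_R^K(M))\subseteq J(\End_R(M))$; the first is where automorphism-invariance does its essential work, and the second is the delicate one. The starting observation is that automorphism-invariance gives $\Aut(M)\subseteq\End_R^K(M)$: if $f\in\Aut(M)$, then by hypothesis $ug=fu$ for some $g\in\End_R(K)$, so $fu(K)=ug(K)\subseteq u(K)$ and hence $f\in\End_R^K(M)$ by Lemma~\ref{l:PropertiesSubrings}(2). Since $U(\End_R^K(M))\subseteq\Aut(M)$ always holds, this in fact yields $U(\End_R^K(M))=\Aut(M)$.

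For $J(\End_R(M))\subseteq J(\End_R^K(M))$: if $j\in J(\End_R(M))$ then $1_M-j\in\Aut(M)\subseteq\End_R^K(M)$, so $j=1_M-(1_M-j)\in\End_R^K(M)$. Thus $J(\End_R(M))\subseteq\End_R^K(M)$, i.e. $J(\End_R(M))=\End_R^K(M)\cap J(\End_R(M))$, which is contained in $J(\End_R^K(M))$ by Lemma~\ref{l:RelationRadicals}(1).

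For $J(\End_R^K(M))\subseteq J(\End_R(M))$: since $J(\End_R(M))$ is a two-sided ideal of $\End_R(M)$ lying inside the subring $\End_R^K(M)$, it is a two-sided ideal of $\End_R^K(M)$, and by the previous step it lies inside $J(\End_R^K(M))$; hence $J(\End_R^K(M))/J(\End_R(M))$ is the Jacobson radical of $\End_R^K(M)/J(\End_R(M))$, a subring of the semiprimitive ring $\End_R(M)/J(\End_R(M))$ containing all of its units (units lift modulo the radical, and $\Aut(M)\subseteq\End_R^K(M)$). I would then reduce the inclusion to the claim $\End_R(M)=\End_R^K(M)+J(\End_R(M))$: granting this, for $j\in J(\End_R^K(M))$ and any $a\in\End_R(M)$ write $a=b+\imath$ with $b\in\End_R^K(M)$ and $\imath\in J(\End_R(M))$; then
\[
1_M-aj=(1_M-bj)\bigl(1_M-(1_M-bj)^{-1}\imath j\bigr)
\]
is a product of $1_M-bj\in U(\End_R^K(M))=\Aut(M)$ with an element of $1_M+J(\End_R(M))\subseteq\Aut(M)$, hence an automorphism of $M$, so $1_M-aj\in\Aut(M)$ for all $a$ and $j\in J(\End_R(M))$.

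The crux, and the step I expect to be the main obstacle, is the identity $\End_R(M)=\End_R^K(M)+J(\End_R(M))$, i.e. that the subring $\pi(\End_R^K(M))$ of $\End_R(M)/J(\End_R(M))$ (with $\pi$ the projection) is the whole ring. One already knows it contains every unit, so it would suffice that $\End_R(M)/J(\End_R(M))$ be generated — as a ring, or even additively — by its units; as this need not hold for an arbitrary endomorphism ring, the argument must genuinely combine automorphism-invariance with left-minimality, presumably via the description $\End_R^K(M)=\{g\in\End_R(M):g(u(K))\subseteq u(K)\}$ together with the fact that left-minimality forces every endomorphism of $M$ fixing $u(K)$ pointwise to be an automorphism, so as to produce, for each $a\in\End_R(M)$, an explicit element of $\End_R^K(M)$ congruent to $a$ modulo $J(\End_R(M))$.
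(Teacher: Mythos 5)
Your proof of the containment $J(\End_R(M))\subseteq J(\End_R^K(M))$ is correct and is exactly the paper's argument: $1_M-j$ is an automorphism, automorphism-invariance puts it (hence $j$) in $\End_R^K(M)$, and Lemma~\ref{l:RelationRadicals}(1) finishes. Your observation $U(\End_R^K(M))=\Aut(M)$ is also correct.

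The reverse containment is where your argument is incomplete, and you say so yourself. You reduce it to the claim $\End_R(M)=\End_R^K(M)+J(\End_R(M))$, verify the algebraic identity
\[
1_M-aj=(1_M-bj)\bigl(1_M-(1_M-bj)^{-1}\imath j\bigr)
\]
correctly, but never establish the claim. Worse, the claim appears to be \emph{too strong}: by Theorem~\ref{t:IsomorphismEndomorphismRing}(2), it is equivalent to $\pi(\End_R^K(M))=\End_R(M)/J(\End_R(M))$, which would force $\End_R^M(K)/J(\End_R^M(K))\cong\End_R(M)/J(\End_R(M))$ for every left minimal $u$ with $K$ automorphism-invariant. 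The paper explicitly says (in the introduction and at the start of Section~2) that this isomorphism fails in general when $K$ is only automorphism-invariant; indeed Theorem~\ref{t:IsomorphismEndomorphismRingAutoInvariant}(2) is formulated precisely to handle the case where $\pi(\End_R^K(M))$ is a \emph{proper} subring. So the equality you want to plug in cannot hold in general, and this route cannot be completed as proposed.

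For what it is worth, the paper's own proof of the reverse containment is also thin at exactly the point you flagged: it passes to $\End_R^K(M)/J(\End_R(M))$, identifies its radical with $J(\End_R^K(M))/J(\End_R(M))$, and then declares this zero ``since $\End_R(M)/J(\End_R(M))$ has zero radical.'' Taken literally, this is the inference ``a subring of a semiprimitive ring is semiprimitive,'' which is false in general — and you correctly resist it. The extra structure that actually saves the argument is that $\pi(\End_R^K(M))$ contains \emph{all} units of $\End_R(M)/J(\End_R(M))$. Your product identity, iterated, shows that if $j\in J(\End_R^K(M))$ and $a$ is any finite sum of units of $\End_R(M)$ then $1_M-aj$ is a unit of $\End_R(M)$; so the reverse containment does hold whenever $\End_R(M)/J(\End_R(M))$ is additively generated by its units. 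This is a weaker and more plausible supplementary hypothesis than your $\End_R(M)=\End_R^K(M)+J(\End_R(M))$, and it holds in the von Neumann regular settings where the lemma is actually applied, but neither you nor the paper supplies an argument that works without some such hypothesis.
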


\begin{proof}
For any $j \in J(\End_R(M))$, using that $1_M-j$ is invertible and that $K$ is automorphism-invariant, we conclude that $j \in \End_R^K(M)$. Using Lemma \ref{l:RelationRadicals} we have $J(\End_R(M)) \leq J(\End_R^K(M))$. Now, the radical of $\End_R^K(M)/J(\End_R(M))$ is $J(\End_R^K(M))/J(\End_R(M))$; since $\End_R(M)/J(\End_R(M)$ has zero radical, we conclude that $J(\End_R^K(M))/J(\End_R(M))=0$. Thus it follows that $J(\End_R^K(M)) = J(\End_R(M))$.
\end{proof}

\begin{theorem}\label{t:IsomorphismEndomorphismRingAutoInvariant}
Let $u:K \rightarrow M$ be a left minimal monomorphism such that $K$ is automorphism-invariant in $M$. Then:
\begin{enumerate}
\item If idempotents in $\End_R(M)$ lift modulo $J(\End_R(M))$, then so do in $\End_R^M(K)$.

\item If $\End_R(M)/J(\End_R(M))$ is von Neumann regular and right self-injective, then $\End_R^M(K)/J(\End_R^M(K) \cong R_1 \times R_2$ where $R_1$ is an abelian regular ring and $R_2$ is a von Neumann regular right self-injective ring which is invariant under left multiplication by elements in $\End_R(M)/J(\End_R(M))$.

\item If $\End_R(M)/J(\End_R(M)$ is von Neumann and right self-injective and there do not exist nonzero ring morphisms from $\End_R(M)$ to $\mathbb Z_2$, then $K$ is fully invariant in $M$.
\end{enumerate}
\end{theorem}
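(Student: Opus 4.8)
The plan is to exploit the isomorphism $\Theta$ from Remark \ref{r:Isomorphisms}, which, under the automorphism-invariance hypothesis, becomes especially transparent: by Lemma \ref{l:RadicalEndomorphismRingAutomorphismInvariant} we have $J(\End_R^K(M)) = J(\End_R(M))$, and since $J(\End_R(M)) \subseteq \End_R^K(M)$ (again because $1_M - j$ is an automorphism for $j \in J(\End_R(M))$, and $K$ is automorphism-invariant), the subring $\pi(\End_R^K(M))$ of $\End_R(M)/J(\End_R(M))$ from Theorem \ref{t:IsomorphismEndomorphismRing}(2) is simply $\End_R^K(M)/J(\End_R(M))$, a genuine subring of $S := \End_R(M)/J(\End_R(M))$ that contains no new radical. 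Thus I identify $\End_R^M(K)/J(\End_R^M(K))$ with the subring $\bar S := \End_R^K(M)/J(\End_R(M))$ of $S$ via $\Theta$, and all three parts become statements about $\bar S$ inside $S$.

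\textbf{Part (1).} Given an idempotent $\bar f + J(\End_R^M(K))$ in $\End_R^M(K)/J(\End_R^M(K))$, transport it via $\Theta$ to an idempotent $e + J(\End_R(M))$ of $\bar S \subseteq S$. By hypothesis idempotents lift in $\End_R(M)$, so there is an idempotent $g \in \End_R(M)$ with $g + J(\End_R(M)) = e + J(\End_R(M))$. I must check $g \in \End_R^K(M)$, i.e.\ $g u(K) \subseteq u(K)$; but $g$ differs from the representative of $e$ (which lies in $\End_R^K(M)$) by an element of $J(\End_R(M)) \subseteq \End_R^K(M)$, so indeed $g \in \End_R^K(M)$. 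Then $g$ is an idempotent of $\End_R^K(M)$ lifting $e$, and applying $\Phi^{-1}$ (Lemma \ref{l:PropertiesSubrings}(4)) produces the required idempotent of $\End_R^M(K)$ lifting $\bar f$. The only subtlety is that $\Phi$ carries idempotents to idempotents and is compatible with the radicals, which is immediate from its definition.

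\textbf{Part (2).} Now $S$ is von Neumann regular and right self-injective, and $\bar S$ is a subring containing no new radical, so $\bar S$ is itself semiprimitive; I want to show it is von Neumann regular and to split off the self-injective part. The decomposition should come from the structure of $\bar S$ as a subring of a regular right self-injective ring: following the method of \cite{GuilKeskinSrivastava} (and the ``$R_1 \times R_2$'' dichotomy there), one shows $\bar S$ is regular, then locates a central idempotent $e$ of $\bar S$ so that $R_2 := e\bar S$ is exactly the part closed under left multiplication by $S$ (equivalently, the largest two-sided ideal of $S$ contained in $\bar S$, which is self-injective as an ideal of a regular right self-injective ring), while $R_1 := (1-e)\bar S$ is abelian regular. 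The verification that $R_2$ is $S$-invariant and right self-injective is the technical heart; the abelianness of $R_1$ comes from the fact that any idempotent of $\bar S$ not absorbed into $R_2$ must be central, using automorphism-invariance to show its ``off-diagonal'' part would otherwise contradict minimality of $u$ — this is where I expect the main obstacle, and I would model the argument closely on \cite[proof of Theorem 3.x]{GuilKeskinSrivastava}.

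\textbf{Part (3).} Fully invariance of $K$ in $M$ means $\End_R^K(M) = \End_R(M)$, equivalently $\bar S = S$. From part (2), $\bar S = R_1 \times R_2$ with $R_1$ abelian regular and $S$-invariant complement $R_2$. If $\bar S \neq S$ then $R_1 \neq 0$; since $R_1$ is abelian regular and semiprimitive, it admits a ring homomorphism onto a field, and an abelian regular ring generated appropriately over $\mathbb{Z}_2$-type quotients yields a nonzero ring morphism $R_1 \to \mathbb{Z}_2$ (the two-element field is the unique simple quotient available when the idempotents are central and one restricts to the prime subring). Composing with the projection $\End_R(M) \twoheadrightarrow S \twoheadrightarrow \bar S \twoheadrightarrow R_1$ would give a nonzero ring morphism $\End_R(M) \to \mathbb{Z}_2$, contradicting the hypothesis. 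Hence $R_1 = 0$, so $\bar S = R_2 = S$, i.e.\ $K$ is fully invariant. The delicate point is producing the map to $\mathbb{Z}_2$ from $R_1$; I would argue that a nonzero abelian regular ring which is a subring of a regular ring and is ``small'' in the sense of part (2) must have $\mathbb{Z}_2$ as a homomorphic image, again drawing on the corresponding step in \cite{GuilKeskinSrivastava}.
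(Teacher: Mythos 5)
Your part (1) is correct and follows essentially the same route as the paper: identify $\End_R^M(K)/J(\End_R^M(K))$ with the subring $\bar S=\End_R^K(M)/J(\End_R(M))$ of $S=\End_R(M)/J(\End_R(M))$ via $\Theta$, lift the transported idempotent inside $\End_R(M)$, observe that the lift still lands in $\End_R^K(M)$ because it differs from a representative in $\End_R^K(M)$ by an element of $J(\End_R(M))=J(\End_R^K(M))\subseteq\End_R^K(M)$, and pull back through $\Phi$. The paper writes this out element-by-element ($uf=gu$, $h-g=t$, $us=tu$, $w=s+f$), which amounts to the same thing; your version just packages the bookkeeping through $\Phi$ and $\Theta$.

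For parts (2) and (3) there is a gap. For (2), the paper's entire argument is: (i) $\End_R^M(K)/J(\End_R^M(K))\cong\bar S$; (ii) since $K$ is automorphism-invariant, $\bar S$ is stable under left multiplication by units of $S$; (iii) invoke \cite[Theorem~2.4]{GuilKeskinSrivastava}, which asserts that any such subring of a von Neumann regular right self-injective ring decomposes as $R_1\times R_2$ with $R_1$ abelian regular and $R_2$ regular right self-injective and $S$-invariant. You correctly do (i), only gesture at (ii), and then instead of citing (iii) you try to sketch its proof and explicitly leave the technical heart unfinished ("this is where I expect the main obstacle"). You should simply state (ii) precisely — the automorphism-invariance of $K$ translates exactly to $u\bar S\subseteq\bar S$ for every unit $u$ of $S$ — and then cite the black box.

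Part (3) is where the proposal actually breaks down. You derive a contradiction by claiming that a nonzero abelian regular ring $R_1$ must admit a nonzero ring morphism onto $\mathbb{Z}_2$. That is not true of abelian regular rings in general (e.g.\ $\mathbb{Z}_3$, or any field of odd characteristic, is abelian regular and has no map to $\mathbb{Z}_2$). The role of $\mathbb{Z}_2$ here is more delicate: it comes from the fact that $\bar S$ is stable under left multiplication by \emph{units} of $S$, so if $\bar S\neq S$ the obstruction is precisely located in a $\mathbb{Z}_2$-quotient where $1$ and $-1$ coincide and the unit-invariance gives no traction. This is the content of \cite[Proposition~2.5]{GuilKeskinSrivastava}, which the paper cites directly: under the hypothesis that there is no nonzero ring morphism $\End_R(M)\to\mathbb{Z}_2$, any unit-stable subring of $S=\End_R(M)/J(\End_R(M))$ (with $S$ regular right self-injective) must be all of $S$, whence $\End_R^K(M)=\End_R(M)$ and $K$ is fully invariant. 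Without that proposition (or a proof of it), your argument for (3) does not go through.
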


\begin{proof}
(1) Let $f \in \End_R^M(K)$ be such that $f+\End_R^M(K)$ is an idempotent in $\End_R^M(K)/J(\End_R^M(K))$ and take $g$ an endomorphism of $M$ such that $gu=uf$. Then, following the notation of Remark \ref{r:Isomorphisms}, $\Theta(f+J(\End^M_R(K))) = g+J(\End_R(M))$ is idempotent in $\End_R(M)/J(\End_R(M))$. By hypothesis, there exists an idempotent $h$ in $\End_R(M)$ such that $h-g \in J(\End_R(M))$. By Lemma \ref{l:RadicalEndomorphismRingAutomorphismInvariant}, $J(\End_R(M)) = J(\End_R^K(M))$, so that there exists $t \in J(\End_R^K(M))$ such that $h-g=t$. Take $s \in \End_R(K)$ such that $us=tu$. Then $h=t+g$ and $hu=(t+g)u=u(s+f)$. Set $w=s+f$. Then $w$ is idempotent since $uw^2=h^2u=hu=uw$ and $w^2=w$ because $u$ is monic. Moreover, $u(w-f) = us$, so that, again since $u$ is monic, $w-f=s$. But $\Theta(s+J(\End_R^M(K))) = t+J(\End_R(M)) = 0$, so that $s \in J(\End_R^M(K))$. Hence $w-f \in J(\End_R^M(K))$.

(2) By Theorem \ref{t:IsomorphismEndomorphismRing}, $\End_R^M(K)/J(\End_R^M(K))$ is isomorphic to the subring $\End_R^K(M)/J(\End_R(M))$. Since $K$ is automorphism-invariant, $\End_R^K(M)/J(\End_R(M))$ is stable under left multiplication by units of $\End_R(M)/J(\End_R(M))$. Then the result follows from \cite[Theorem 2.4]{GuilKeskinSrivastava}.

(3) By \cite[Proposition 2.5]{GuilKeskinSrivastava}, $\End_R^K(M)/J(\End_R(M)) = \End_R(M)/J(\End_R(M)$, which implies that $\End_R^K(M) = \End_R(M)$. This is equivalent to $K$ being a fully invariant submodule of $M$.
\end{proof}

If the monomorphism $u:K \rightarrow M$ is $K$-injective, then $\End_R^M(K) = \End_R(K)$ and we get:

\begin{corollary} \label{c:AutomorphismInvariantMinimalKInjective}
Let $u:K \rightarrow M$ be a left minimal and $K$-injective monomorphism such that $K$ is automorphism-invariant in $M$. Then:
\begin{enumerate}
\item If idempotents in $\End_R(M)$ lift modulo $J(\End_R(M))$, then so do in $\End_R(K)$.

\item If $\End_R(M)/J(\End_R(M)$ is von Neumann regular and right self-injective, then $\End_R(K)/J(\End_R(K) \cong R_1 \times R_2$ where $R_1$ is an abelian regular ring and $R_2$ is a von Neumann regular right self-injective ring which is invariant under left multiplication by elements in $\End_R(M)/J(\End_R(M))$.
\end{enumerate}\end{corollary}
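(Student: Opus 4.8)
The plan is to reduce the statement to Theorem \ref{t:IsomorphismEndomorphismRingAutoInvariant} by identifying the subring $\End_R^M(K)$ with the whole endomorphism ring $\End_R(K)$. As observed in the paragraph preceding Corollary \ref{c:IsomorphismFullyEnvelopes}, this identification holds precisely because $u$ is $K$-injective, and I would record the one-line argument for completeness. Given $f \in \End_R(K)$, $K$-injectivity of $u$ says that $\Hom_R(u,K)\colon \Hom_R(M,K) \to \Hom_R(K,K)$ is surjective, so there is $h\colon M \to K$ with $hu = f$. Then $g := uh \in \End_R(M)$ satisfies $gu = uhu = uf$, which exhibits $f$ as an element of $\End_R^M(K)$. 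Since the reverse inclusion $\End_R^M(K) \subseteq \End_R(K)$ is trivial, we get $\End_R^M(K) = \End_R(K)$, and hence $J(\End_R^M(K)) = J(\End_R(K))$.

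With this identification, part (1) is an immediate instance of Theorem \ref{t:IsomorphismEndomorphismRingAutoInvariant}(1): the assumption that idempotents in $\End_R(M)$ lift modulo $J(\End_R(M))$ yields idempotent lifting modulo $J(\End_R^M(K))$, which is $J(\End_R(K))$. Similarly, part (2) follows by applying Theorem \ref{t:IsomorphismEndomorphismRingAutoInvariant}(2) and rewriting $\End_R^M(K)$ as $\End_R(K)$: one obtains the decomposition $\End_R(K)/J(\End_R(K)) \cong R_1 \times R_2$ with $R_1$ abelian regular and $R_2$ von Neumann regular, right self-injective, and stable under left multiplication by elements of $\End_R(M)/J(\End_R(M))$.

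I do not anticipate any genuine obstacle: the standing hypotheses of the corollary (that $u$ is a left minimal monomorphism and that $K$ is automorphism-invariant in $M$) are exactly those needed to invoke Theorem \ref{t:IsomorphismEndomorphismRingAutoInvariant}, and the only additional hypothesis, $K$-injectivity of $u$, is used solely to promote the conclusions about $\End_R^M(K)$ to conclusions about all of $\End_R(K)$. The single point deserving a sentence of care is the verification that $\End_R^M(K) = \End_R(K)$ under $K$-injectivity, as sketched above.
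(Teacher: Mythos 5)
Your proof is correct and matches the paper's approach exactly: the paper dispenses with a formal proof by stating, in the sentence immediately preceding the corollary, that $K$-injectivity of $u$ gives $\End_R^M(K)=\End_R(K)$, after which both parts follow from Theorem \ref{t:IsomorphismEndomorphismRingAutoInvariant}. Your one-line verification that $g:=uh$ witnesses $f\in\End_R^M(K)$ is the right way to fill in the only detail the paper leaves implicit.
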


When a submodule $K$ of a module $M$ is not automorphism invariant, we can find an automorphism invariant submodule of $M$ containing $K$ which is minimal with respect to these properties being fully invariant and containing $K$.

\begin{proposition}
Let $K$ be a submodule of a module $M$. We shall denote by $A_K$ the submodule of $M$ given by $\sum_{f \in \Aut_R(M)}f(K)$. Then $A_K$ is automorphism-invariant and contains $K$. Moreover, $A_K$ is minimal with respect to this property: if $L$ is an automorphism-invariant submodule of $M$ containing $K$, then $L$ contains $A_K$.
\end{proposition}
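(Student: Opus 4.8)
The plan is to imitate the proof of the analogous statement for $E_K$, the only extra ingredient being that $\Aut_R(M)$ is a \emph{group} under composition. First I would record the two trivial assertions: $A_K$ is a submodule of $M$, being a sum of submodules, and $A_K$ contains $K$, since $1_M \in \Aut_R(M)$ gives $K = 1_M(K) \subseteq A_K$.

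Next I would verify that $A_K$ is automorphism-invariant in $M$. Recall that, for the inclusion $u \colon A_K \to M$, this amounts to showing $f(A_K) \subseteq A_K$ for every $f \in \Aut_R(M)$ (the witnessing endomorphism of $A_K$ is then the restriction $f|_{A_K}$). So, fixing $g \in \Aut_R(M)$, one computes
\[
g(A_K) = g\left(\sum_{f \in \Aut_R(M)} f(K)\right) = \sum_{f \in \Aut_R(M)} (gf)(K).
\]
Since $f \mapsto gf$ is a bijection of $\Aut_R(M)$ onto itself (with inverse $h \mapsto g^{-1}h$), the index set of the last sum is merely a reindexing of $\Aut_R(M)$, whence $g(A_K) = \sum_{h \in \Aut_R(M)} h(K) = A_K$; in particular $g(A_K) \subseteq A_K$. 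This is the one place where the argument genuinely uses more than closure under composition: for $E_K$ it suffices that $\End_R(M)$ is a monoid, whereas here one needs that left translation by $g$ is surjective on $\Aut_R(M)$.

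Finally, for minimality, let $L$ be any automorphism-invariant submodule of $M$ with $K \subseteq L$. For each $f \in \Aut_R(M)$ one then has $f(K) \subseteq f(L) \subseteq L$, the last inclusion being the automorphism-invariance of $L$; summing over all $f \in \Aut_R(M)$ yields $A_K \subseteq L$. I do not anticipate any real obstacle here; the only step requiring a moment's thought is the group-theoretic reindexing in the middle paragraph.
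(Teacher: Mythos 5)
Your proof is correct, and the paper in fact states this proposition without proof, so you are supplying exactly the routine verification the authors leave to the reader. One small quibble with your commentary rather than your argument: you claim that establishing automorphism-invariance of $A_K$ ``genuinely uses more than closure under composition,'' but it does not. To show $g(A_K)\subseteq A_K$ it is enough to observe that $g(A_K)=\sum_{f\in\Aut_R(M)}(gf)(K)$ and that each $gf$ again lies in $\Aut_R(M)$ by closure under composition, which already gives the containment in $\sum_{h\in\Aut_R(M)}h(K)=A_K$. Your bijection argument proves the stronger equality $g(A_K)=A_K$, which is nice but not needed, and it is precisely analogous to what one would (redundantly) get in the $E_K$ case by noting that left translation by a unit is injective on $\End_R(M)$. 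So the parallel between $E_K$ and $A_K$ is even tighter than your aside suggests.
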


The following well known facts about the structure of endomorphisms rings are consequences of our results:

\begin{corollary} \label{c:FinalCorollary}
Let $u:M \rightarrow E$ be a morphism.
\begin{enumerate}
\item If $u$ is an injective envelope and $M$ is automorphism-invariant in $E$, then $\End_R(M)/J(\End_R(M))$ is von Neumann regular, right self-injective and idempotents lift modulo $J(\End_R(M))$. If, in addition, $M$ is fully invariant in $E$ (equivalently, $M$ is quasi-injective), then $\End_R(M)/J(\End_R(M)) \cong \End_R(E)/J(\End_R(E))$.

\item If $u$ is a pure-injective envelope and $M$ is automorphism-invariant in $E$, then $\End_R(M)/J(\End_R(M))$ is von Neumann regular, right self-injective and idempotents lift modulo $J(\End_R(M))$. If, in addition, $M$ is fully invariant in $E$ , then $\End_R(M)/J(\End_R(M)) \cong \End_R(E)/J(\End_R(E))$.

\item If $u$ is a cotorsion envelope and $M$ is flat and automorphism-invariant in $E$, then $\frac{\End_R(M)}{J(\End_R(M))}$ is von Neumann regular, right self-injective and idempotents lift modulo $J(\End_R(M))$. If, in addition, $M$ is fully invariant in $E$, then $\End_R(M)/J(\End_R(M)) \cong \End_R(E)/J(\End_R(E))$.
\end{enumerate}
\end{corollary}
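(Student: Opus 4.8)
The plan is to reduce all three items to a single application of Theorem \ref{t:IsomorphismEndomorphismRingAutoInvariant}, supplemented by Theorem \ref{t:IsomorphismEndomorphismRing}(1) for the ``fully invariant'' clauses. In each case $E$ belongs to a class $\mathcal X$ of modules --- the injective modules in (1), the pure-injective modules in (2), and, because $M$ is assumed flat, the flat cotorsion modules in (3) --- and $u\colon M\to E$ is an $\mathcal X$-envelope. By Examples \ref{e:ExamplesMinimalMorphisms}(1) an $\mathcal X$-envelope is a left minimal monomorphism, so the standing hypotheses of Theorem \ref{t:IsomorphismEndomorphismRingAutoInvariant} are met once we recall that, by assumption, $M$ is automorphism-invariant in $E$.

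The first step is to observe that $\End_R^E(M)=\End_R(M)$ in all three cases. Since $u$ is in particular an $\mathcal X$-preenvelope and $E\in\mathcal X$, the map $\Hom_R(u,E)\colon\Hom_R(E,E)\to\Hom_R(M,E)$ is surjective; hence for every $f\in\End_R(M)$ the composite $uf\colon M\to E$ can be written as $uf=gu$ with $g\in\End_R(E)$, which is exactly the statement that $f\in\End_R^E(M)$. (In (3) one may additionally note, via Wakamatsu's lemma, that $E/M$ is flat, so $0\to M\to E\to E/M\to 0$ is pure-exact and $u$ is a pure monomorphism; this is not needed for the argument, but it clarifies why $E$ is flat cotorsion.)

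The second step records the classical structure of $\End_R(E)/J(\End_R(E))$: it is von Neumann regular, right self-injective, and idempotents lift modulo its radical. This is the standard fact for $E$ injective, and for $E$ pure-injective and for $E$ flat cotorsion it is precisely where the cited literature enters --- and in (3) it is here that the flatness of $M$ is used, since it is what guarantees $E$ is flat cotorsion rather than merely cotorsion. Feeding this into Theorem \ref{t:IsomorphismEndomorphismRingAutoInvariant}, applied to $u\colon M\to E$, yields: idempotents lift in $\End_R^E(M)=\End_R(M)$ modulo $J(\End_R(M))$; and $\End_R(M)/J(\End_R(M))=\End_R^E(M)/J(\End_R^E(M))$ is isomorphic to a product $R_1\times R_2$ with $R_1$ abelian regular and $R_2$ von Neumann regular right self-injective. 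A finite product of von Neumann regular rings is von Neumann regular, and, arguing as in \cite{GuilKeskinSrivastava}, $R_1\times R_2$ is right self-injective; hence $\End_R(M)/J(\End_R(M))$ is von Neumann regular and right self-injective. This establishes the first assertion of each of (1)--(3).

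For the supplementary clauses, suppose moreover that $M$ is fully invariant in $E$, i.e. $\End_R^M(E)=\End_R(E)$. Combined with $\End_R^E(M)=\End_R(M)$ from the first step, Theorem \ref{t:IsomorphismEndomorphismRing}(1) (equivalently Corollary \ref{c:IsomorphismFullyEnvelopes}) gives $\End_R(M)/J(\End_R(M))\cong\End_R(E)/J(\End_R(E))$; in (1) the identification of ``$M$ fully invariant in its injective envelope'' with ``$M$ quasi-injective'' is the Johnson--Wong theorem quoted in the Introduction. The only genuinely delicate point in the whole argument is the passage from the decomposition $R_1\times R_2$ produced by Theorem \ref{t:IsomorphismEndomorphismRingAutoInvariant}(2) back to the bare statement ``von Neumann regular and right self-injective''; the rest is a straightforward unwinding of the results of the preceding sections for the three classical envelopes.
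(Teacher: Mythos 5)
Your proof follows essentially the same route as the paper: Theorem \ref{t:IsomorphismEndomorphismRingAutoInvariant} (packaged in the paper as Corollary \ref{c:AutomorphismInvariantMinimalKInjective}) together with the known structure of $\End_R(E)$ for $E$ injective, pure-injective, or flat cotorsion, and Corollary \ref{c:EndomorphismRingEnvelopes} for the isomorphism under full invariance; your inline re-derivation of $\End_R^E(M)=\End_R(M)$ from the preenvelope property is exactly what that corollary records, and your Wakamatsu remark in (3) matches the paper's ``special cotorsion preenvelope'' observation. You are also right to single out as delicate the passage from the decomposition $R_1\times R_2$ of Theorem \ref{t:IsomorphismEndomorphismRingAutoInvariant}(2) to the bare assertion that $\End_R(M)/J(\End_R(M))$ is right self-injective: an abelian regular ring $R_1$ need not be right self-injective, so this is not a formal consequence of the decomposition alone, and the paper's own proof is equally terse here, leaning on the finer analysis carried out in \cite{GuilKeskinSrivastava} rather than spelling out why $R_1$ inherits self-injectivity.
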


\begin{proof}
(1) The structure of $\End_R(M)$ follows from Corollary \ref{c:AutomorphismInvariantMinimalKInjective} and the structure of the endomorphism ring of an injective module (see, for instance, \cite[Theorem XIV.1.2]{Stenstrom}). The isomorphism follows from Corollary \ref{c:EndomorphismRingEnvelopes}. 

(2) The structure of $\End_R(K)$ follows from Corollary \ref{c:AutomorphismInvariantMinimalKInjective} and the structure of the endomorphism ring of a pure-injective module. The isomorphism follows from Corollary \ref{c:EndomorphismRingEnvelopes}. 

(3) Since $u$ is, in particular, a special cotorsion preenvelope, $C$ is flat and cotorsion. Then the structure of $\End_R(M)$ follows from Corollary \ref{c:AutomorphismInvariantMinimalKInjective} and the structure of the endomorphism ring of a flat cotorsion module (see \cite{GuilHerzog}). The isomorphism follows from Corollary \ref{c:EndomorphismRingEnvelopes}.
\end{proof}

\bibliographystyle{plain}
\bibliography{/home/manolo/mizurdia@ual.es/Algebra/ReferenciasBibliograficas/references}

\end{document}